\newcolumntype{L}{>{$}l<{$}}
\newcolumntype{C}{>{$}c<{$}}
\newcolumntype{R}{>{$}r<{$}}
\newcommand{\url}[1]{\texttt{#1}}
\newtheorem{thm}{Theorem}%[section]
\newtheorem{prop}[thm]{Proposition}
\theoremstyle{definition}
\newcommand{\ie}{\textit{i.e.}\xspace}
\newcommand{\Z}{\ensuremath{\mathbb{Z}}\xspace}
\newcommand{\R}{\ensuremath{\mathbb{R}}\xspace}
\newcommand{\Sig}{\mathfrak{S}}
\renewcommand{\leq}{\leqslant}
\renewcommand{\geq}{\geqslant}
\renewcommand{\epsilon}{\varepsilon}
\renewcommand{\phi}{\varphi}
\renewcommand{\rho}{\varrho}
\newcommand{\Restriction}{\mathpunct{\mathpunct{\restriction}}}
\newcommand{\suchthat}{\ |\ }
\newcommand{\CORRECTION}[1]{#1}
\newcommand{\CORRECT}[1]{#1}
\newcommand{\vh}{\beta}
\newcommand{\varphiVH}{\vh}
\newcommand{\gene}[1]{\ensuremath{\bm{\mathsf{#1}}}}
\definecolor{codegreen}{rgb}{0,0.6,0}
\definecolor{codegray}{rgb}{0.7,0.7,0.7}
\lstdefinestyle{mystyle}{
    commentstyle=\color{codegreen},
    keywordstyle=\color{red},
    numberstyle=\tiny\color{codegray},
    basicstyle=\footnotesize\ttfamily,
    breakatwhitespace=false,
    breaklines=true,
    captionpos=b,
    keepspaces=true,
    numbers=none, % right
    numbersep=5pt,
    showspaces=false,
    showstringspaces=false,
    showtabs=false,
    tabsize=2
}
\begin{document}
\begin{frontmatter}
\title{From multivalued to Boolean functions: preservation of soft nested canalization}
\author[1]{Élisabeth Remy}
\ead{elisabeth.remy@univ-amu.fr}
\ead[url]{https://mabios.math.cnrs.fr/}
\author[2]{Paul Ruet\corref{cor1}}
\ead{ruet@irif.fr}
\ead[url]{https://www.irif.fr/~ruet/}
\affiliation[1]{organization={Aix Marseille Univ, CNRS, I2M}, city={Marseille}, country={France}}
\affiliation[2]{organization={CNRS, Université Paris Cité}, city={Paris}, country={France}}
\cortext[cor1]{Corresponding author}
\begin{abstract}
Nested canalization (NC) is a property of Boolean functions which has been recently extended to multivalued functions. We study the effect of the Van Ham mapping (from multivalued to Boolean functions) on this property. We introduce the class of softly nested canalizing (SNC) multivalued functions, and prove that the Van Ham mapping sends SNC multivalued functions to NC Boolean functions. Since NC multivalued functions are SNC, this preservation property holds for NC multivalued functions as well. We also study the relevance of SNC functions in the context of gene regulatory network modelling.
\end{abstract}
\begin{keyword}
nested canalizing functions \sep Boolean functions \sep multivalued functions \sep regulatory network modelling
\end{keyword}
\end{frontmatter}

%%%%%%%%%%%%%%%%%%%%%%%%%%%%%%%%%%%%%%%%%%%%%%%%%%%%%%%%%%%%%%%%%%%%%%%%%%
\section{Introduction}
%%%%%%%%%%%%%%%%%%%%%%%%%%%%%%%%%%%%%%%%%%%%%%%%%%%%%%%%%%%%%%%%%%%%%%%%%%

Dynamical properties of biological systems, such as stability and robustness, are often associated with the canalizing property. This notion is at the basis of the work of Waddington in embryology, which describes, in the 1940s, epigenetic landscapes representing embryogenesis by canalizing configurations \cite{Wad42}. In the 1990’s, S. Kauffman introduced the class of canalizing Boolean functions \cite{Kau93,Kau03} to formalize the canalizing behaviour observed in gene regulatory networks. In short, canalizing Boolean functions are functions $f:(\Z/{2\Z})^n\rightarrow\Z/{2\Z}$ (or $\R$) such that at least one input variable, say $x_i$ ($1\leq i\leq n$), has a value $a=0$ or $1$ which determines the value of $f(x)$. Nested canalizing (NC) functions provide a ``recursive'' version of canalizing functions: an NC function $f$ is canalizing and, moreover, its restriction $f\Restriction_{x_i\neq a}$ is itself NC.
NC functions are particularly interesting because of their ``low complexity'': in particular, their average sensitivity (a measure of complexity related to spectral concentration, learning properties, decision tree complexity \cite{ODon14} and stability properties \cite{Kau93,Kau04}) is bounded above by a constant \cite{Lau13,Klo13}.

A natural question is the extension of this canalizing property to multivalued functions, involving more than two expression levels, which are often needed in the modeling of biological systems to circumvent the too restrictive Boolean representation \cite{Tho91}. The present paper investigates the property of nested canalization for multivalued functions $f:(\Z/{k\Z})^n\rightarrow\Z/{k\Z}$ or $\R$ for some $k\geq 2$. A definition of nested canalizing (NC) multivalued function has been proposed in \cite{ML11,ML12}. In \cite{RR22}, we introduced a more general class of multivalued functions, which we called weakly nested canalizing (WNC), for which we proved that the average sensitivity is bounded above by a constant.

Multivalued modelling allows to distinguish different thresholds of action of a gene on its targets, and is of course more complex to analyse than the Boolean case. Boolean mappings have been proposed, associating one Boolean variable to each threshold. \CORRECTION{In this way, the various actions of the multivalued component in the network are deployed, each represented by a Boolean variable. The behavior of the Boolean system thus obtained can be related to that of the initial multivalued model}. This gives access to a large number of theoretical works and numerical tools specific to the analysis of Boolean models. Thus we are interested in the relationship between the notions of canalization for multivalued and Boolean functions. Clearly an NC Boolean function is NC multivalued, but what can we say about the other direction? More precisely, a mapping $\vh$ from multivalued functions to Boolean functions has been defined by Van Ham in \cite{vH79}, and it has been proved in \cite{DRC11,Ton19} to be in some sense unique (it is the only injective such mapping preserving neighbours in the state space, and for which the regulatory graphs are unchanged).

We prove here that $\vh$ maps NC multivalued functions to NC Boolean functions. To this end, we introduce in Section \ref{secncf} the notion of softly nested canalizing (SNC) multivalued functions, and prove that NC $\Rightarrow$ SNC $\Rightarrow$ WNC (Proposition \ref{prop:ncsnc} and Section \ref{sec:wnc}). Then we actually prove in Theorem \ref{th:nc} that SNC functions are mapped by $\vh$ to NC Boolean functions. On the other hand, we show that Theorem \ref{th:nc} does not extend to WNC functions: we shall see in Section \ref{contrex} an example of a WNC function which is mapped by $\vh$ to a non NC Boolean function.

NC functions appear as appropriate rules in Boolean models of gene regulatory networks. Indeed, Boolean networks have been proposed as models for gene regulatory networks \cite{Tho73}, with different nodes corresponding to different genes. The activity of a gene is regulated by the activity of other genes to which it is connected through a Boolean function. These networks are built from biological data and knowledge from the literature, and their associated Boolean functions are far from random. It turns out that NC functions are predominant in the large databases of Boolean gene networks \cite{Sub22}. In Section \ref{examples} we extract the multivalued genes from this database, and we study the canalizing properties of their multivalued functions and their corresponding Boolean functions.

%%%%%%%%%%%%%%%%%%%%%%%%%%%%%%%%%%%%%%%%%%%%%%%%%%%%%%%%%%%%%%%%%%%%%%%%%%
\section{Nested canalizing multivalued functions}
%%%%%%%%%%%%%%%%%%%%%%%%%%%%%%%%%%%%%%%%%%%%%%%%%%%%%%%%%%%%%%%%%%%%%%%%%%
\label{secncf}

The concept of nested canalization, originally considered for Boolean functions \cite{Kau93}, has been extended to multivalued functions in \cite{ML11,ML12,KLKAL17}. We first recall here this definition, before introducing soft nested canalization in Section \ref{secsoft}.

Let $k,n$ be positive integers, $k\geq 2$. $\Z/{k\Z}$ is the ring of integers modulo $k$.
%If $f:(\Z/{k\Z})^n\rightarrow\Z/{k\Z}$ and $X\subseteq(\Z/{k\Z})^n$, $f\Restriction_X:X\rightarrow\Z/{k\Z}$ denotes the restriction of $f$ to $\Omega$.
A function $f:(\Z/{k\Z})^n\rightarrow\Z/{k\Z}$ is said to be \emph{canalizing with respect to coordinate $i$ and $(a,b)\in\Z/{k\Z}\times\Z/{k\Z}$} if there exists a function $g:(\Z/{k\Z})^n\rightarrow\Z/{k\Z}$ different from the constant $b$ such that
$$
f(x)=
\begin{cases}
b & \text{if } x_i=a \\
g(x) & \text{if } x_i\neq a.
\end{cases}
$$
We shall simply say that $f$ is \emph{canalizing} if it is canalizing with respect to some $i,a,b$.

A \emph{segment} is a \CORRECTION{(nonempty, strict)} subset of $\Z/{k\Z}$ of the form $\{0,\ldots,i\}$ or $\{i,\ldots,k-1\}$, with $0\leq i\leq k-1$. Let $\sigma\in\Sig_n$ be a permutation, $A_1,\ldots,A_n$ be segments, and $c_1,\ldots,c_{n+1}\in\Z/{k\Z}$ be such that $c_n\neq c_{n+1}$. Then $f$ is said to be \emph{nested canalizing (NC) with respect to $\sigma$, $A_1,\ldots,A_n$, $c_1,\ldots,c_{n+1}$} if 
$$
f(x)=
\begin{cases}
c_1 & \text{if } x_{\sigma(1)}\in A_1 \\
c_2 & \text{if } x_{\sigma(1)}\notin A_1,x_{\sigma(2)}\in A_2 \\
\;\vdots & \quad\vdots \\
c_n & \text{if } x_{\sigma(1)}\notin A_1,\ldots,x_{\sigma(n-1)}\notin A_{n-1},x_{\sigma(n)}\in A_n \\
c_{n+1} & \text{if } x_{\sigma(1)}\notin A_1,\ldots,x_{\sigma(n-1)}\notin A_{n-1},x_{\sigma(n)}\notin A_n.
\end{cases}
$$
We shall simply say that $f$ is \emph{NC} if it is NC with respect to some $\sigma$, $A_1,\ldots,A_n$, $c_1,\ldots,c_{n+1}$.

\CORRECTION{In Sections \ref{sec:reduc} and \ref{examples}, we shall be interested in multivalued networks. These networks are represented by functions $F:(\Z/{k\Z})^n\rightarrow(\Z/{k\Z})^n$, where for each state $x\in(\Z/{k\Z})^n$ and each $i\in\{1,\ldots,n\}$, $F_i(x)\in\Z/{k\Z}$ gives the target value of gene number $i$. We shall say that such a network $F$ is \emph{NC} if all functions $F_i:(\Z/{k\Z})^n\rightarrow\Z/{k\Z}$ are.}

Note that the segment $A_i$ plays the role of a threshold for the variable $x_{\sigma(i)}$. For an NC multivalued function $f$ as defined in \cite{ML11,ML12,KLKAL17}, each variable $x_{\sigma(i)}$ is used exactly once in the process of canalization (and the value of $f$ is determined by the threshold corresponding to $A_i$). It is possible to relax the definition by allowing canalization along successive thresholds for a same variable. This is essentially what we shall do in defining softly nested canalizing functions below. In doing so, we shall also remove the condition $c_n\neq c_{n+1}$.

\subsection{Softly nested canalizing multivalued functions}
%=======================================================
\label{secsoft}

Let $n$ be a positive integer. For each $i\in\{1,\ldots,n\}$, let $k_i>0$, $\Omega_i$ be a set of $k_i$ integers, $\Omega=\prod_i\Omega_i$, and $f:\Omega\rightarrow\R$. Note that we do not require $k_i\geq 2$ for all $i$. If $k_j=1$ for some $j$, $f$ could be viewed as a function with one less variable, \ie as a function on $\prod_{i\neq j}\Omega_i$, but we still consider it as a function defined on $\prod_i\Omega_i$.

We shall say that $f$ is \emph{softly canalizing with respect to coordinate $i$ and $(a,b)\in\Omega_i\times\R$} if $f(x)=b$ whenever $x_i=a$, and simply that it is \emph{softly canalizing} if it is softly canalizing with respect to some $i,a,b$. \CORRECTION{We shall also say that \CORRECT{$F:\Omega\rightarrow\Omega$} is \emph{SNC} if all functions $F_i:\Omega\rightarrow\Omega_i$ are.}

Note that this definition differs slightly from the usual definition by the absence of condition on the values of $f$ for $x_i\neq a$: we do not require the existence of some $x$ such that $x_i\neq a$ and $f(x)\neq b$. In particular, constant functions are softly canalizing, though not canalizing.

If $f$ is canalizing with respect to $i,a,b$ and $k_i\geq 2$, we shall consider
$$
f\Restriction_{x_i\neq a}:\Omega\cap\{x\mid x_i\neq a\}\rightarrow\R,
$$
the restriction of $f$ to the set of $x\in\Omega$ such that $x_i\neq a$.

The class of softly nested canalizing on $\Omega=\prod_i\Omega_i$ is then defined by induction on the cardinality $|\Omega|=\prod_ik_i$ of $\Omega$. If $|\Omega|=1$, \ie $k_i=1$ for all $i$, any $f:\Omega\rightarrow\R$ is \emph{softly nested canalizing (SNC) on $\Omega$}. If $|\Omega|>1$, $f:\Omega\rightarrow\R$ is \emph{SNC on $\Omega$} if there exist $i,a,b$ such that
\begin{itemize}[itemsep=0pt]
\item $k_i\geq 2$,
\item $a$ is either the min or the max of $\Omega_i$,
\item $f$ is softly canalizing with respect to $i,a,b$,
\item $f\Restriction_{x_i\neq a}$ is SNC on $\Omega\cap\{x\mid x_i\neq a\}$, a strict subset of $\Omega$.
\end{itemize}

Intuitively, a function $f:\Omega\rightarrow\R$ is SNC if its domain $\Omega$ can be ``peeled'' by successively removing coordinate hyperplanes (defined by equations of the form $x_i=a$ with $a$ minimal or maximal) whose points are mapped by $f$ to the same value, whence the following characterization:

\begin{prop}
\label{prop:charac}
Letting $K=\sum_ik_i-n$, $f$ is SNC if and only if there exist a function $v:\{1,\ldots,K\}\rightarrow\{1,\ldots,n\}$, numbers $a_i\in\Omega_{v(i)}$ and $b_i\in\R$ for each $i\in\{1,\ldots,K\}$ such that:
$$
f(x)=
\begin{cases}
b_1 & \text{if } x_{v(1)}=a_1 \\
b_2 & \text{if } x_{v(1)}\neq a_1,x_{v(2)}=a_2 \\
\;\vdots & \quad\vdots \\
b_K & \text{if } x_{v(1)}\neq a_1,\ldots,x_{v(K-1)}\neq a_{K-1},x_{v(K)}=a_K \\
\end{cases}
$$
and for each $i\in\{1,\ldots,K\}$, $\Omega'_{v(i)}=\Omega_{v(i)}\setminus\{a_j \suchthat j<i, v(i)=v(j)\}$ is not a singleton and $a_i$ is either the min or the max of $\Omega'_{v(i)}$.
\end{prop}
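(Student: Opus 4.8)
The plan is to prove both directions by induction on $|\Omega|$, matching the inductive structure of the definition of SNC. The characterization essentially says that the recursive ``peeling'' process can be unrolled into a flat list of canalizing steps indexed by $\{1,\ldots,K\}$, where $K=\sum_i k_i - n$ is exactly the total number of hyperplanes one can peel off: each coordinate $i$ contributes $k_i-1$ peels before $\Omega_i$ is reduced to a singleton, and summing gives $K$. So the first thing I would check is this bookkeeping, namely that the number of steps in the explicit formula equals $K$.

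For the forward direction (SNC $\Rightarrow$ existence of $v,a_i,b_i$), I would argue by induction on $|\Omega|$. The base case $|\Omega|=1$ gives $K=0$, so the list is empty and the condition holds vacuously. For $|\Omega|>1$, the definition gives $i,a,b$ with $k_i\geq 2$, $a$ minimal or maximal in $\Omega_i$, $f$ softly canalizing with respect to $i,a,b$, and $f\Restriction_{x_i\neq a}$ SNC on $\Omega'=\Omega\cap\{x\mid x_i\neq a\}$. I would set $v(1)=i$, $a_1=a$, $b_1=b$, apply the induction hypothesis to the restriction to obtain a list $v',a'_j,b'_j$ of length $K-1$ on $\Omega'$, and then prepend the first step. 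The key point to verify is that on $\Omega'$ the relevant component set is $\Omega_i\setminus\{a\}$, whose min/max condition, when combined with the removal of $a_1$, yields exactly the stated condition $\Omega'_{v(i)}=\Omega_{v(i)}\setminus\{a_j\mid j<i,\ v(i)=v(j)\}$ after reindexing $j\mapsto j+1$.

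For the converse, given such a flat list I would reconstruct the SNC derivation, again by induction on $|\Omega|$ (equivalently on $K$). When $K=0$ the domain is a singleton and $f$ is SNC by definition. When $K>0$, I would take the first triple $v(1),a_1,b_1$: the condition guarantees $\Omega_{v(1)}$ is not a singleton, so $k_{v(1)}\geq 2$, that $a_1$ is min or max, and that $f(x)=b_1$ whenever $x_{v(1)}=a_1$, which is precisely soft canalization with respect to $v(1),a_1,b_1$. The remaining cases of the formula describe $f\Restriction_{x_{v(1)}\neq a_1}$ via the tail of the list, and I would check that this tail satisfies the hypotheses of the proposition on the smaller domain $\Omega'$, so the induction hypothesis yields that the restriction is SNC; applying the definition closes the induction.

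The main obstacle I anticipate is the indexing discipline in the min/max condition: because a single variable may be peeled several times, the condition is phrased relative to the \emph{current} set $\Omega'_{v(i)}$ obtained after deleting the previously used values $a_j$ with the same $v(j)$. I would need to argue carefully that ``min or max of $\Omega_i$'' at each recursive stage translates exactly into ``min or max of $\Omega_{v(i)}$ minus the earlier same-coordinate values'' in the flattened list, and conversely. This requires observing that successively removing minima or maxima from a finite subset of $\Z$ (or of $\Omega_i$) keeps it an interval-like set for which the min/max operations are well defined and consistent across the two formulations; this is the one place where a short lemma or explicit remark about the order structure of $\Omega_i$ is worthwhile, while the rest reduces to routine reindexing.
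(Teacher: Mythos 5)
Your proof is correct: both directions follow by the induction on $|\Omega|$ that you describe, the bookkeeping $K=\sum_i(k_i-1)$ is right, and the reindexing of the min/max condition (prepending/removing the first peel and shifting $j\mapsto j+1$) goes through exactly as you sketch. The paper in fact gives no proof of this proposition --- it treats the characterization as an immediate unrolling of the recursive definition (``whence the following characterization'') --- so your argument simply formalizes that unrolling; the only superfluous part is your worry about ``interval-like'' order structure, which is unnecessary because every nonempty finite subset of \Z has a well-defined min and max, so the condition on $\Omega'_{v(i)}$ makes sense without any such lemma.
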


In decomposing an NC function $f:(\Z/{k\Z})^n\rightarrow\Z/{k\Z}$, each coordinate $i\in\{1,\ldots,n\}$ is considered exactly once (in some order prescribed by a permutation $\sigma$) and the value of $f$ is fixed for $x_{\sigma(i)}$ in some segment $A_i$. This can be realized by successively fixing the value of $f$ for each $\alpha\in A_i$, and therefore, the class of SNC functions contains the class of NC functions, as stated in the following Proposition:

\begin{prop}
\label{prop:ncsnc}
If $f:(\Z/{k\Z})^n\rightarrow\Z/{k\Z}$ is NC, then it is SNC.
\end{prop}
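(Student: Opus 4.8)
The plan is to prove Proposition \ref{prop:ncsnc} by exhibiting, for a given NC function $f$ (say with respect to $\sigma$, the segments $A_1,\ldots,A_n$ and the values $c_1,\ldots,c_{n+1}$), a sequence $v,a_i,b_i$ of the kind described in Proposition \ref{prop:charac}, where here $\Omega_i=\Z/{k\Z}$ for every $i$, so that $K=\sum_i k_i-n=n(k-1)$. The governing idea is the one announced in the text: each NC threshold ``$x_{\sigma(m)}\in A_m$'' is replaced by $|A_m|$ elementary soft-canalizing steps peeling the elements of $A_m$ one at a time, and the final region on which $f\equiv c_{n+1}$ is peeled using the fact that constant functions are softly canalizing.

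Concretely, I would order the $K$ steps in two phases. In the \emph{segment phase} I process $m=1,\ldots,n$ in turn; for each $m$ I peel the elements of $A_m$ from coordinate $\sigma(m)$, taking them from the endpoint of $\Z/{k\Z}$ at which the segment is anchored — in increasing order $0,1,\ldots$ when $A_m=\{0,\ldots,j\}$, and in decreasing order $k-1,k-2,\ldots$ when $A_m=\{j,\ldots,k-1\}$ — always setting $b_i=c_m$. In the \emph{constant phase} I then peel, in any fixed order, all the values remaining on each coordinate, always setting $b_i=c_{n+1}$.

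Then I would check the three requirements of Proposition \ref{prop:charac}. The min/max requirement is exactly where the segment hypothesis is used: because $A_m$ is anchored at an endpoint of $\Z/{k\Z}$, each of its elements, taken in the prescribed order, is the minimum (\resp the maximum) of the set of values of coordinate $\sigma(m)$ not yet removed, and the same holds trivially in the constant phase; moreover each $\Omega'_{v(i)}$ still has at least two elements when we peel from it, since segments are strict subsets and we never peel a coordinate already reduced to a singleton. For the value requirement $f(x)=b_i$: fixing $x_{\sigma(m)}=a_i\in A_m$ together with the accumulated inequalities $x_{v(1)}\neq a_1,\ldots,x_{v(i-1)}\neq a_{i-1}$ forces $x_{\sigma(1)}\notin A_1,\ldots,x_{\sigma(m-1)}\notin A_{m-1}$, so the NC formula returns $c_m$; in the constant phase those inequalities place $x$ in the region $x_{\sigma(1)}\notin A_1,\ldots,x_{\sigma(n)}\notin A_n$, where $f\equiv c_{n+1}$. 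Finally the bookkeeping closes: coordinate $\sigma(m)$ loses $|A_m|$ values in the segment phase and $k-|A_m|-1$ in the constant phase, hence $k-1$ in total, so the two phases use exactly $n(k-1)=K$ steps and leave every coordinate a singleton.

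The only genuine obstacle is organizational: keeping track, across both phases, of which values remain on each coordinate, so that the min/max and ``not a singleton'' conditions are visibly satisfied at every step — resting on the one real idea that an endpoint-anchored segment can be stripped one element at a time without ever destroying the min/max property. (Equivalently, one could argue straight from the inductive definition, peeling the single hyperplane $x_{\sigma(1)}=a$ with $a$ an endpoint of $A_1$ and $b=c_1$, then checking that $f\Restriction_{x_{\sigma(1)}\neq a}$ still has an endpoint-anchored threshold structure and concluding by induction on $|\Omega|$; but Proposition \ref{prop:charac} makes the counting more transparent.)
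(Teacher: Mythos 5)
Your construction is essentially the paper's own proof. The paper converts each NC threshold into elementary peels in exactly the same two phases: for each coordinate $\sigma(m)$ it first lists the elements of $A_m$ ordered from the endpoint at which the segment is anchored (all with value $c_m$), then lists all but one element of the complement of $A_m$ (all with value $c_{n+1}$), and finally relabels the concatenated lists as the data $v$, $a_r$, $b_r$ of Proposition~\ref{prop:charac}. Your verification of the value requirement (the accumulated inequalities force $x_{\sigma(1)}\notin A_1,\ldots,x_{\sigma(m-1)}\notin A_{m-1}$, hence $f(x)=c_m$, and in the second phase $f(x)=c_{n+1}$) and your count $n(k-1)=K$ are the same as in the paper.

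One step, however, would fail as written: in the constant phase you peel the remaining values ``in any fixed order'' and claim the min/max requirement ``holds trivially''. It does not. After the segment phase, the values left on coordinate $\sigma(m)$ form the complementary interval (say $\{j+1,\ldots,k-1\}$ when $A_m=\{0,\ldots,j\}$), and removing an interior element first would make it neither the min nor the max of the remaining set, violating the characterization --- this freedom of order is precisely what distinguishes WNC from SNC, so it cannot be waved through. The fix is immediate and is what the paper does: peel the complement monotonically from an endpoint (the paper takes the endpoint of $\Z/k\Z$ opposite to the one anchoring $A_m$, \ie in decreasing order from $k-1$ when $0\in A_m$), so that each removed value is an extreme of what remains. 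With that correction (and reading ``all the values remaining'' as ``all but one'', as your own count $k-|A_m|-1$ already indicates), your proof is complete and coincides in substance with the paper's.
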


\begin{proof}
Assume $f$ is NC with respect to $\sigma$, $A_1,\ldots,A_n$, $c_1,\ldots,c_{n+1}$. For each $i\in\{1,\ldots,n\}$, let
\begin{align*}
A_i &= \{\alpha_i^1,\ldots,\alpha_i^{|A_i|}\} \\
(\Z/{k\Z})\setminus A_i &= \{\alpha_i^{1+|A_i|},\ldots,\alpha_i^{k-1}\}.
\end{align*}
More precisely, the $K=n(k-1)$ numbers $\alpha_i^j\in\Z/{k\Z}$ are defined according to the following ordering which depends on the nature of the segment $A_i$:
\begin{itemize}
\item either $0\in A_i$, and then we choose the numbers $\alpha_i^j$ so that $0=\alpha_i^1<\cdots<\alpha_i^{|A_i|}$ and $k-1=\alpha_i^{1+|A_i|}>\cdots>\alpha_i^{k-1}$;
\item or $k-1\in A_i$, and we choose the $\alpha_i^j$ so that $k-1=\alpha_i^1>\cdots>\alpha_i^{|A_i|}$ and $0=\alpha_i^{1+|A_i|}<\cdots<\alpha_i^{k-1}$.
\end{itemize}
If both $0$ and $k-1\in A_i$, \ie if $A_i=\Z/k\Z$, we choose any of the two orders. Now, for each $i\in\{1,\ldots,n\}$ and $j\in\{1,\ldots,k-1\}$, let
$$
\beta_i^j=
\begin{cases}
c_i & \text{if } j\leq |A_i| \\
c_{n+1} & \text{otherwise.}
\end{cases}
$$
To comply with the characterization of SNC functions (Proposition \ref{prop:charac}), we relabel the numbers $\alpha_i^j,\beta_i^j$ by identifying the list
$$
\alpha_1^1,\ldots,\alpha_1^{|A_1|},\ldots,\alpha_n^1,\ldots,\alpha_n^{|A_i|},\alpha_1^{1+|A_1|},\ldots,\alpha_1^k,\ldots,\alpha_n^{1+|A_n|},\ldots,\alpha_n^{k-1}
$$
as the list $a_1,\ldots,a_K$, and by identifying similarly the list
$$
\beta_1^1,\ldots,\beta_1^{|A_1|},\ldots,\beta_n^1,\ldots,\beta_n^{|A_n|},\beta_1^{1+|A_1|},\ldots,\beta_1^k,\ldots,\beta_n^{1+|A_n|},\ldots,\beta_n^{k-1}
$$
as the list $b_1,\ldots,b_K$. Call $\phi$ this relabelling, which maps $r\in\{1,\ldots,K\}$ to the pair $\phi(r)=(i,j)$ such that $a_r=\alpha_i^j$ and $b_r=\beta_i^j$. For instance, $\phi(1)=(1,1)$ and $\phi(K)=(n,k-1)$.

Then finally, a function $v:\{1,\ldots,K\}\rightarrow\{1,\ldots,n\}$ is defined as follows: if $1\leq r\leq K$, there exists a unique pair $(i,j)$ such that $\phi(r)=(i,j)$, and we let $v(r)=\sigma(i)$. Then $f$ clearly satisfies the characterization of SNC functions, with the choice of function $v$ and numbers $a_r, b_r$.
\end{proof}

\subsection{Some examples}
%=======================================================

\begin{itemize}
\item We have noticed \CORRECTION{in Section \ref{secsoft}} that constant functions from $(\Z/{k\Z})^n$ to $\Z/{k\Z}$ are not canalizing, therefore not NC, but they are SNC.
\item An easy induction on $k$ shows that the functions $\min$ and $\max:(\Z/{k\Z})^2\rightarrow\Z/{k\Z}$ are SNC. For instance, $\min=\min_k:\{0,\ldots,k-1\}^2\rightarrow\{0,\ldots,k-1\}$ is softly canalizing with respect to $1,0,0$, then $\min_k\Restriction_{x_1\neq 0}$ is softly canalizing with respect to $2,0,0$, and $\min_k\Restriction_{x_1\neq 0,x_2\neq 0}$ is identical to the function $\min_{k-1}:\{1,\ldots,k-1\}^2\rightarrow\{1,\ldots,k-1\}$, which is SNC.

However, they are not NC \cite{KLKAL17}. Intuitively, there is more freedom in the construction of SNC functions, which can be built by successively ``peeling'' coordinate hyperplanes defined on some coordinate $i$ (\ie by some equation $x_i=a$), then a coordinate hyperplane defined on some other coordinate $j$, and later a coordinate hyperplane defined on $i$ again.
\item For the same reason, the identity from $(\Z/{k\Z})^n$ to $(\Z/{k\Z})^n$ is SNC but not NC. As we shall see in Section \ref{examples}, this applies to the functions governing the regulation of genes such as \gene{Raf}, \gene{Dsor}, \gene{Drk}, \gene{Stat92E}.
\item In Section \ref{examples}, we shall see other examples of SNC and non SNC functions occurring in multivalued gene regulatory networks found in the literature.
\end{itemize}

\subsection{WNC multivalued functions and average sensitivity}
%=======================================================
\label{sec:wnc}

It is possible to consider a slightly more general notion of multivalued canalization, by removing the condition on canalizing values in the definition of SNC functions. This is what we do in \cite{RR22}: \emph{weakly nested canalizing (WNC) functions} $f:(\Z/{k\Z})^n\rightarrow\Z/{k\Z}$ are defined like SNC functions, but the values $a$ used to define $f(x)$ for $x_i=a$ need not be extreme values (initially $0$ or $k-1$), they can be intermediate values: $0<a<k-1$.

By Proposition \ref{prop:ncsnc}, NC $\Rightarrow$ SNC $\Rightarrow$ WNC, and we prove in \cite{RR22} that WNC (hence NC and SNC) multivalued functions have ``low complexity'' in the sense that their \emph{average sensitivity} (see \cite[Chapter~8]{ODon14}) is bounded above by a constant (independent of $n$), while the average sensitivity of an arbitrary multivalued function is of order $\mathcal{O}(n)$.

It is worth noticing that the above implications are strict. For instance, the function $f:\Z/3\Z\times\Z/3\Z\rightarrow\Z/3\Z$ defined by the following table:
$$
f(x,y)=\quad
\begin{array}{l|ccc}
%\overset{\;\;\text{\small $y$}\!}{x}
x\backslash y
&0&1&2 \\ \hline
0&2&0&0 \\
1&1&1&1 \\
2&2&0&2
\end{array}
$$
is WNC but not SNC: the first canalization has to take the intermediate value $x=1$ (and determines $f(x,y)=1$), then the second canalization takes $y=0$ (determining $f(x,y)=2$) or $y=1$ (determining $f(x,y)=0$), and finally $x=0$ or $x=2$. 

%%%%%%%%%%%%%%%%%%%%%%%%%%%%%%%%%%%%%%%%%%%%%%%%%%%%%%%%%%%%%%%%%%%%%%%%%%
\section{Preservation of nested canalization under Bool\-ean mapping}
%%%%%%%%%%%%%%%%%%%%%%%%%%%%%%%%%%%%%%%%%%%%%%%%%%%%%%%%%%%%%%%%%%%%%%%%%%
\label{sec:reduc}

\subsection{Mapping multivalued \CORRECTION{networks} to Boolean \CORRECTION{networks}}
%=======================================================

Let $n$, $\Omega_i=\Z/k_i\Z$ and $\Omega=\prod_{i=1}^n\Omega_i$ be as above.

The injective mapping $\vh:\Omega\hookrightarrow\{0,1\}^k$, with $k=\sum_ik_i-n$, proposed by Van Ham \cite{vH79} is defined as follows. For $i\in\{1,\ldots,n\}$ and $1\leq a \leq k_i-1$, let $\vh_{i,a}:\Omega\rightarrow\{0,1\}$ be defined by
$$
\vh_{i,a}(x) =
\begin{cases}
1 & \text{if } x_i\geq a \\
0 & \text{otherwise,}
\end{cases}
$$
and
$$
\vh_i(x) = (\vh_{i,1}(x),\ldots,\vh_{i,k_i-1}(x)) = 1^{x_i}0^{k_i-1-x_i},
$$
where $1^\ell = (1,\ldots,1)$ is the $\ell$-tuple of $1$'s (and similarly for $0^\ell$), and tuples of $0$'s and $1$'s are represented by words (without commas and parentheses). The maps can be combined to define $\varphiVH:\Omega\hookrightarrow\{0,1\}^k$ by
\begin{align*}
\vh(x)
&= (\vh_{1,1}(x),\ldots,\vh_{1,k_1-1}(x),\ldots,\vh_{n,1}(x),\ldots,\vh_{n,k_n-1}(x))
\\
&= 1^{x_1}0^{k_1-1-x_1} \cdots 1^{x_n}0^{k_n-1-x_n}.
\end{align*}
The image $\vh(\Omega)$ of $\vh$ is a strict subset of $\{0,1\}^k$ (unless $k_i = 2$ for all $i$), and a point $x\in\{0,1\}^k$ is said to be \emph{admissible} when $x\in\vh(\Omega)$. Given \CORRECTION{a network} $F:\Omega\rightarrow\Omega$, by injectivity of $\vh$, \CORRECTION{the following commutative diagram}
$$
\begin{tikzcd}
\Omega \arrow{r}{F} \arrow[swap]{d}{\vh} & \Omega \arrow{d}{\vh} \\
\vh(\Omega) \arrow[swap]{r}{F^\vh} & \vh(\Omega)
\end{tikzcd}
$$
defines $F^\vh:\vh(\Omega)\rightarrow\vh(\Omega)$, the \emph{Booleanization} of $F$.

Clearly, if $F$ is Boolean (\ie $k_i=2$ for all $i$), then $\vh(\Omega)=\Omega=(\Z/2\Z)^n$ and $F^\vh=F$.

Let us mention that other injective mappings from $\Omega$ to $\{0,1\}^k$ may be defined but $\vh$ is the only one preserving neighbours and regulatory graphs \cite{DRC11,Ton19}:
\begin{itemize}
\item two neighbouring states \CORRECTION{(\ie states $x,y\in\Omega$ such that $dist(x,y)=\sum_{i=1}^n|x_i-y_i|=1$)} are mapped by $\vh$ to neighbouring states in $\vh(\Omega$),
\item the global regulatory graphs $\mathcal{G}(F)$ and $\mathcal{G}(F^\vh)$ underlying the dynamics $F$ and $F^\vh$ are isomorphic. We refer to, e.g., \cite{RR05,RR08b,Rue16} for the definition of $\mathcal{G}(F)$.
\end{itemize}
The purpose of the remainder of this section is to prove that $\vh$ maps SNC multivalued networks to NC (Boolean) networks. Before doing this in Theorem \ref{th:nc} we need to recall the definition of nested canalization for Boolean functions.

\subsection{Nested canalizing Boolean functions}
%=======================================================

Let $k$ be a positive integer, $\sigma\in\Sig_k$ be a permutation, and $a_1,\ldots,a_k$, $b_1,\ldots,b_k\in\{0,1\}$. We recall that a Boolean function $f:\{0,1\}^k\rightarrow\{0,1\}$ is said to be \emph{nested canalizing (NC) with respect to $\sigma,a_1,\ldots,a_k,b_1,\ldots,b_k$} if 
\begin{equation}
\label{eq:nc}
\tag{$\ast$}
f(x)=
\begin{cases}
b_1 & \text{if } x_{\sigma(1)}= a_1 \\
b_2 & \text{if } x_{\sigma(1)}\neq a_1,x_{\sigma(2)}= a_2 \\
\;\vdots & \quad\vdots \\
b_k & \text{if } x_{\sigma(1)}\neq a_1,\ldots,x_{\sigma(k-1)}\neq a_{k-1},x_{\sigma(k)}= a_k.
\end{cases}
\end{equation}
Note that we slightly modify the usual definition by not requiring $f(x)=1-b_k$ when $x_{\sigma(1)}\neq a_1,\ldots,x_{\sigma(k-1)}\neq a_{k-1},x_{\sigma(k)}\neq a_k$.

We shall simply say that $f$ is \emph{NC} if it is NC with respect to some $\sigma,a_1,\ldots,a_k$, $b_1,\ldots,b_k$, and that a network $F=(F_1,\ldots,F_k):\{0,1\}^k\rightarrow\{0,1\}^k$ is \emph{NC} if $F_j:\{0,1\}^k\rightarrow\{0,1\}$ is NC for all $j$.

\subsection{Nested canalizing partial Boolean functions}
%=======================================================
\label{sec:subdomains}

In Section \ref{sec:boolcanal}, we shall consider functions $f:X\rightarrow\{0,1\}$ defined on a subset $X\subseteq\{0,1\}^k$. We extend the notion of nested canalization to such partial functions, by simply saying that $f:X\rightarrow\{0,1\}$ is \emph{NC} with respect to the above data if condition (\ref{eq:nc}) holds for all $x\in X$.

\subsection{Relation between $\vh$ and nested canalization}
%=======================================================
\label{sec:boolcanal}

\begin{thm}
\label{th:nc}
If $F=(F_1,\ldots,F_n):\Omega\rightarrow\Omega$ is SNC, then $F^\vh:\vh(\Omega)\rightarrow\vh(\Omega)$ is NC.
\end{thm}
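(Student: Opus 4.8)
The network $F^\vh$ is NC precisely when each of its $k$ Boolean coordinate functions is NC. Writing the Boolean coordinates of $\vh(\Omega)\subseteq\{0,1\}^k$ as pairs $(i,a)$ with $1\leq i\leq n$ and $1\leq a\leq k_i-1$, the commutative diagram defining $F^\vh$ gives, for each such $(j,c)$,
$$(F^\vh)_{j,c}(\vh(x))=\vh_{j,c}(F(x)),$$
which equals $1$ if $F_j(x)\geq c$ and $0$ otherwise. The plan is to fix $j$ and $c$ and show that this partial Boolean function on $\vh(\Omega)$ is NC in the sense of Section \ref{sec:subdomains}, using the SNC structure of the component $F_j$ supplied by Proposition \ref{prop:charac}: a peeling order $v:\{1,\ldots,K\}\to\{1,\ldots,n\}$ (with $K=\sum_ik_i-n=k$), extreme values $a_i\in\Omega'_{v(i)}$, and output values $b_i\in\Omega_j$.

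First I would translate each peeling step into a single Boolean coordinate together with a canalizing bit. Fix a step $i$, put $m=v(i)$, and let $\Omega'_m=\{p,\ldots,q\}$ be the (necessarily nonsingleton) remaining interval for coordinate $m$, so $p<q$. If $a_i=p$ is the minimum, then on admissible points with $x_m\geq p$ the equality $x_m=p$ holds exactly when $\vh_{m,p+1}(x)=0$, and I assign to step $i$ the Boolean coordinate $(m,p+1)$ and canalizing value $0$. If $a_i=q$ is the maximum, then $x_m=q$ holds exactly when $\vh_{m,q}(x)=1$, and I assign the coordinate $(m,q)$ and the value $1$. A short check shows that, as $i$ ranges over the $k_m-1$ peeling steps of a fixed coordinate $m$, these assignments realize a bijection onto the $k_m-1$ Boolean coordinates $(m,1),\ldots,(m,k_m-1)$: peeling the minimum crosses the threshold between $p$ and $p+1$, peeling the maximum crosses the threshold between $q-1$ and $q$, and each of the $k_m-1$ thresholds is crossed exactly once. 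Collecting over all $m$, the map $i\mapsto\tau(i)$ sending each of the $k$ peeling steps to its Boolean coordinate is a permutation of the $k$ Boolean coordinates; let $a^{\flat}_i\in\{0,1\}$ be the associated canalizing bit, write $\vh_{\tau(i)}$ for the corresponding coordinate map, and set $b^{\flat}_i=1$ if $b_i\geq c$ and $0$ otherwise.

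The heart of the argument is an induction on $i$ showing that, for $x$ with $\vh(x)$ admissible, the Boolean conditions
$$\vh_{\tau(1)}(x)\neq a^{\flat}_1,\ \ldots,\ \vh_{\tau(i-1)}(x)\neq a^{\flat}_{i-1},\ \vh_{\tau(i)}(x)=a^{\flat}_i$$
are equivalent to the multivalued conditions $x_{v(1)}\neq a_1,\ldots,x_{v(i-1)}\neq a_{i-1},x_{v(i)}=a_i$. Granting this, $(F^\vh)_{j,c}(\vh(x))=1$ iff $F_j(x)\geq c$ iff $b_i\geq c$ iff $b^{\flat}_i=1$ on the $i$-th region, so condition (\ref{eq:nc}) holds for $(F^\vh)_{j,c}$ with respect to $\tau$, the $a^{\flat}_i$ and the $b^{\flat}_i$; the points of $\vh(\Omega)$ falling into no region (the image of the single unpeeled state) impose no constraint, exactly as permitted by the partial and modified Boolean definitions. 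The inductive step rests on the observation that, once the first $i-1$ conditions have restricted coordinate $m=v(i)$ to the interval $\{p,\ldots,q\}$, admissibility forces $\vh_{m,1}(x)=\cdots=\vh_{m,p}(x)=1$ and $\vh_{m,q+1}(x)=\cdots=\vh_{m,k_m-1}(x)=0$, so that the position of $x_m$ inside $\{p,\ldots,q\}$ is governed precisely by the free bits $\vh_{m,p+1},\ldots,\vh_{m,q}$; the single bit $\vh_{\tau(i)}$ then separates $x_m=a_i$ from $x_m\neq a_i$.

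I expect the main obstacle to be this last equivalence, and in particular making explicit that it relies on admissibility. On an arbitrary point of $\{0,1\}^k$ no single bit can encode the equality $x_m=a_i$; it is only because $\vh(x)$ is the monotone ``thermometer'' word $1^{x_m}0^{k_m-1-x_m}$ that peeling an extreme value of $\Omega'_m$ corresponds to fixing one threshold bit, which is exactly why the statement concerns the partial function on $\vh(\Omega)$ rather than a total Boolean function. Keeping careful track of which bits have already been frozen by earlier steps, and verifying that they are consistent with rather than additional to the admissibility constraints, is the one place where the bookkeeping must be done with care.
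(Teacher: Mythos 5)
Your proposal is correct and follows essentially the same route as the paper's proof: fix one Boolean coordinate $(j,c)$ of $F^\vh$, take the peeling decomposition of $F_j$ from Proposition \ref{prop:charac}, translate each min/max peel into a threshold bit with canalizing value ($0$ at bit $a_i+1$ for a min, $1$ at bit $a_i$ for a max — the paper's $\epsilon_i$ and $a'_i$), prove by induction that the Boolean conditions on admissible points match the multivalued ones (the paper's properties $(P)$ and $(Q_i)$), and read off the NC form with values $\chi_{b_i\geq c}$. The one point you treat more explicitly than the paper is the verification that the map from peeling steps to threshold bits is a bijection onto all $k$ Boolean coordinates, which is indeed needed for the permutation $\sigma\in\Sig_k$ in the Boolean NC definition and is left implicit in the paper.
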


\begin{proof}
As in the definition of $\vh$, we shall use double indices for Boolean functions, and let
\begin{align*}
F^\vh &= (F^\vh_{1,1},\ldots,F^\vh_{1,k_1-1},\ldots,F^\vh_{n,1},\ldots,F^\vh_{n,k_n-1}) \\
&= (F^\vh_1,\ldots,\ldots,F^\vh_n),
\end{align*}
where $F^\vh_{j,a}=\vh_{j,a}\circ F:\vh(\Omega)\rightarrow\{0,1\}$ and
$$
F^\vh_j=\vh_j\circ F=(F^\vh_{j,1},\ldots,F^\vh_{j,k_j-1}):\vh(\Omega)\rightarrow\{0,1\}^{k_j-1}.
$$
Therefore $F^\vh_j(\vh(x))=1^{F_j(x)}0^{k_j-1-F_j(x)}$ and $F^\vh_{j,a}(\vh(x)) = 1 \Leftrightarrow F_j(x)\geq a$.

Let us fix $j\in\{1,\ldots,n\}$ and $1\leq a \leq k_j-1$ and prove that $F^\vh_{j,a}$ is NC.

By assumption, $F_j:\Omega\rightarrow\Omega_j$ is SNC, hence by Proposition \ref{prop:charac}, there exist a function $v:\{1,\ldots,K\}\rightarrow\{1,\ldots,n\}$, with $K=\sum_ik_i-n$, and numbers $a_i\in\Omega_{v(i)}$ and $b_i\in\Omega_j$ for each $i\in\{1,\ldots,K\}$ such that, for all $x\in\Omega$:
$$
F_j(x)=
\begin{cases}
b_1 & \text{if } x\in\Omega^{(1)} \text{ and } x_{v(1)}=a_1 \\
b_2 & \text{if } x\in\Omega^{(2)}  \text{ and } x_{v(2)}=a_2 \\
\;\vdots & \quad\vdots \\
b_K & \text{if } x\in\Omega^{(K)} \text{ and } x_{v(K)}=a_K,
\end{cases}
$$
where for all $i\in\{1,\ldots,K\}$, $\Omega^{(i)}\subseteq\Omega$ is the set of $x$ such that
$$
x_{v(1)}\neq a_1,\ldots,x_{v(i-1)}\neq a_{i-1}.
$$
Note that $\Omega=\Omega^{(1)}\supset\Omega^{(2)}\supset\cdots\supset\Omega^{(K)}$ form a decreasing sequence of subsets of $\Omega$.

Moreover, by Proposition \ref{prop:charac}, the numbers $a_i$ are assumed to satisfy the following constraint:
$$
\text{either $a_i=\min\Omega^{(i)}_{v(i)}$ or $a_i=\max\Omega^{(i)}_{v(i)}$.}
$$
Let $\epsilon_i=0,a'_i=a_i+1$ in the first case (\emph{min} case) and $\epsilon_i=1,a'_i=a_i$ in the second case (\emph{max} case). Letting $y=\vh(x)$, we observe that in the min case, when $x\in\Omega^{(i)}$ we have
$$
x_{v(i)}=a_i \Leftrightarrow x_{v(i)}\leq a_i \Leftrightarrow y_{v(i),a_i+1}=\vh_{v(i),a_i+1}(x)=0,
$$
and that in the max case, we have
$$
x_{v(i)}=a_i \Leftrightarrow x_{v(i)}\geq a_i \Leftrightarrow y_{v(i),a_i}=\vh_{v(i),a_i}(x)=1.
$$
These two equivalences are summarized in the following property:
\begin{equation}
\label{eq:omega}
\tag{$P$}
x_{v(i)}=a_i \Leftrightarrow y_{v(i),a'_i}=\epsilon_i \text{ whenever } x\in\Omega^{(i)}.
\end{equation}
Let us prove by induction on $i$ that
\begin{equation}
\label{eq:induction}
\tag{$Q_i$}
x\in\Omega^{(i)}
\Leftrightarrow\;\;
\begin{cases}
y_{v(1),a'_1}\neq\epsilon_1 \\
\;\vdots \\
y_{v(i-1),a'_{i-1}}\neq\epsilon_{i-1}.
\end{cases}
\end{equation}
$Q_1$ is trivial because $\Omega^{(1)}=\Omega$, and by property $P$, we have
\begin{align*}
x\in\Omega^{(i+1)}
& \Leftrightarrow
\left(x\in\Omega^{(i)} \text{ and } x_{v(i)}\neq a_i\right)
\\
& \Leftrightarrow
\left(x\in\Omega^{(i)} \text{ and } y_{v(i),a'_i}\neq\epsilon_i\right),
\end{align*}
hence $Q_i$ entails $Q_{i+1}$, and we have shown that $Q_i$ holds for any $i$. Combining properties $P$ and $Q_i$, we then obtain:
$$
\begin{cases}
x\in\Omega^{(i)} \\
x_{v(i)}=a_i
\end{cases}
\Leftrightarrow\;\;
\begin{cases}
y_{v(1),a'_1}\neq\epsilon_1 \\
\;\vdots \\
y_{v(i-1),a'_{i-1}}\neq\epsilon_{i-1} \\
y_{v(i),a'_i}=\epsilon_i.
\end{cases}
$$
On the other hand, $F^\vh_{j,a}(y) = 1 \Leftrightarrow F_j(x)\geq a$. Hence, letting $\chi_{b\geq a}=1$ if $b\geq a$ and $0$ otherwise, we have $F^\vh_{j,a}(y) = \chi_{F_j(x)\geq a}$. In particular
$$
F_j(x)=b_i \Rightarrow F^\vh_{j,a}(y)=\chi_{b_i\geq a}.
$$
Therefore, the condition that $F_j$ is SNC gives, for any $y\in\vh(\Omega)$:
$$
F^\vh_{j,a}(y)=
\begin{cases}
\chi_{b_1\geq a} & \text{if } y_{v(1),a'_1}=\epsilon_1 \\
\chi_{b_2\geq a} & \text{if } y_{v(1),a'_1}\neq\epsilon_1,y_{v(2),a'_2}=\epsilon_2 \\
\;\vdots & \quad\vdots \\
\chi_{b_K\geq a} & \text{if } y_{v(1),a'_1}\neq\epsilon_1,\ldots,y_{v(K-1),a'_{K-1}}\neq\epsilon_{K-1},y_{v(K),a'_K}=\epsilon_K.
\end{cases}
$$
This means that $F^\vh_{j,a}$ is NC in the sense of Section \ref{sec:subdomains}. Since this holds for all $j,a$, $F^\vh$ is NC.
\end{proof}

\subsection{Counterexample for WNC functions}
%=======================================================
\label{contrex}

Let $f:\Omega=\Z/3\Z\times\Z/3\Z\rightarrow\Z/3\Z$ be the function defined in Section \ref{sec:wnc}. It is WNC but not SNC, and $\vh(\Omega)\subseteq\{0,1\}^2\times\{0,1\}^2$. The function $f^\vh:\vh(\Omega)\rightarrow\{0,1\}^2$ is given by the following table:
$$
f^\vh(x,y)=\quad
\begin{array}{c|ccc}
&00&10&11 \\ \hline
00&11&00&00 \\
10&10&10&10 \\
11&11&00&11
\end{array}
$$
where pairs in $\{0,1\}^2$ are written without commas and parentheses. This table is simply obtained from the one defining $f$ by the substitutions $0\mapsto 00, 1\mapsto 10, 2\mapsto 11$.

Now, the two Boolean functions $f_1^\vh,f_2^\vh:\vh(\Omega)\rightarrow\{0,1\}$ such that $f^\vh=(f_1^\vh,f_2^\vh)$ are therefore:
$$
f_1^\vh(x,y)=\quad
\begin{array}{c|ccc}
&00&10&11 \\ \hline
00&1&0&0 \\
10&1&1&1 \\
11&1&0&1
\end{array}
\quad\quad
f_2^\vh(x,y)=\quad
\begin{array}{c|ccc}
&00&10&11 \\ \hline
00&1&0&0 \\
10&0&0&0 \\
11&1&0&1
\end{array}
$$
and it is easy to see that $f_1^\vh$ is NC but that $f_2^\vh$ is not NC. Therefore Theorem \ref{th:nc} does not extend to WNC functions.

%%%%%%%%%%%%%%%%%%%%%%%%%%%%%%%%%%%%%%%%%%%%%%%%%%%%%%%%%%%%%%%%%%%%%%%%%%
%\section{Examples in the regulatory network literature}
\CORRECTION{
\section{Canalization of biological networks}
%%%%%%%%%%%%%%%%%%%%%%%%%%%%%%%%%%%%%%%%%%%%%%%%%%%%%%%%%%%%%%%%%%%%%%%%%%
\label{examples}
}

Boolean NC functions appear predominantly in databases of Boolean genetic networks \cite{KBHKS20,Sub22}. If this canalizing property reflects physical characteristics of biological systems, we wondered whether this property is also present in multivalued functions that model these same systems. Thus, in order to have an insight into the characteristics of multivalued functions specified in the context of biological systems, \CORRECTION{we considered a sample of such functions from biological models.}

\CORRECT{
\subsection{Analysis of biological rules}
%=======================================================
\label{study}
}
We selected the multivalued genes present in the logical models explored in \cite{Sub22} (4\CORRECTION{8} genes endowed with a ternary variable).

The logical rules corresponding to these genes are collected in the Appendix. To simplify the notation in the rules, the name of the gene is written to represent its activity variable (for instance $\gene{A}:1$ stands for $x_{\gene{A}} =1$), and the value of $F_{\gene{A}}$ is defined by means of the usual logical connectives (conjunction $\wedge$, disjunction $\vee$ and negation $\neg$). For example, the first and third lines of Table \ref{tab:MVrulesS1} mean:
$$
F_{\gene{Drk}}(x)=
\begin{cases}
1 & \text{if } x_{\gene{Der}}=1 \\
2 & \text{if } x_{\gene{Der}}=2 \\
0 & \text{otherwise}
\end{cases}
\qquad
F_{\gene{RI}}(x)=
\begin{cases}
1 & \text{if } x_{\gene{Dsor1}}=1 \text{ and } x_{\gene{Msk}}=1 \\
2 & \text{if } x_{\gene{Dsor1}}=2 \text{ and } x_{\gene{Msk}}=1 \\
0 & \text{otherwise}
\end{cases}
$$
This means that gene \gene{Drk} can be activated at level $1$ (resp. $2$) when its (unique) regulator \gene{Der} is at level $1$ (resp. $2$). Gene \gene{RI} has two regulators, \gene{Msk} (which is Boolean) and \gene{Dsor1}, and it can be activated at level $1$ (resp. $2$) if \gene{Msk} is present ($x_{\gene{Msk}}=1$) and \gene{Dsor1} is at level $1$ (resp. $2$).

Some situations are more complex. For instance, gene \gene{MadMed} (extracted from the model described in \cite{Mbo13}) has three regulators: a multilevel activator \gene{Tkv}, a Boolean activator \gene{Sax} and a Boolean inhibitor \gene{Dad}. The rule for \gene{MadMed} is the following (cf Table \ref{tab:MVrulesC1}):
$$
F_{\gene{MadMed}}(x)=
\begin{cases}
1 & \text{if } (x_{\gene{Tkv}}=1 \text{ or } x_{\gene{Sax}}=1) \text{ and } x_{\gene{Dad}}=0 \text{ and } x_{\gene{Tkv}} < 2 \\
2 & \text{if } x_{\gene{Tkv}}=2 \text{ and } x_{\gene{Dad}}=0\\
0 & \text{otherwise.}
\end{cases}
$$
Thus, in the absence of the inhibitor \gene{Dad}, the presence of one of the two activators at level 1 (max level for \gene{Sax} and intermediate for \gene{Tkv}) allows \gene{MadMed} to be activated at its level 1. \gene{MadMed} can be activated at its maximum level as soon as its activator \gene{Tkv} is at its maximum level $2$ and the inhibitor \gene{Dad} is absent. 

We studied the canalizing properties (NC, SNC, WNC) of the logical functions. We also applied the Boolean mapping $\vh$ and studied the canalizing property of the resulting Boolean functions. The results of the analysis are collected in Tables \ref{tab:MVnodesA} to \ref{tab:MVnodesD}, reflecting four different qualitative situations:
\begin{enumerate}[label=(\alph*)]
\item \label{situA}
functions which are NC (hence SNC, WNC, and with NC Boolean\-iz\-ation $F^\vh$);
\item \label{situB}
functions which are not NC, but SNC (hence WNC, and with NC Booleanization);
\item \label{situC}
functions which are not WNC (hence neither NC nor SNC) but with NC Booleanization;
\item \label{situD}
functions which are not WNC and with non NC Booleanizations.
\end{enumerate}
Although we noticed that WNC are in general not necessarily SNC, remark that in the above set of functions, all WNC functions are actually SNC.

\CORRECTION{
\subsection{Classification of multivalued functions based on their structure}
%=======================================================
\label{structureS}
}
The structure of multivalued logical rules varies greatly. A quite frequent situation is when the level of only one regulator determines the target value of the gene:
\begin{equation}
\label{C1}
\tag{$S$}
\begin{cases}
\gene{A}=1 & \mbox{if } \gene{B}\wedge\phi \\
\gene{A}=2 & \mbox{if } \neg\gene{B}\wedge\phi
\end{cases}
\end{equation}
with \gene{B} a regulator of \gene{A}, and $\phi$ a ``context'' (conditions on the presence or absence of other regulators). This concerns the genes listed in Tables \ref{tab:MVrulesS1}, \ref{tab:MVrulesS2}, \ref{tab:MVrulesS3}. We can see in Tables \ref{tab:MVnodesA} to \ref{tab:MVnodesD} that the rules satisfying this structure (\ref{C1}) generally have strong canalizing properties: situations \ref{situA} or \ref{situB}. In particular, their Booleanizations are all NC. Then, depending on the context, we distinguish three situations:
\begin{itemize}
\item if $\phi$ depends on multivalued variables and is expressed only with $\wedge$, then the function behaves according to situation \ref{situB}; this is the case of, e.g., \gene{Stat92E}; these functions are listed in Table \ref{tab:MVrulesS1};
\item if $\phi$ is expressed with at least one $\vee$, then the function behaves according to situation \ref{situC} (functions listed in Table \ref{tab:MVrulesS2});
\item if $\phi$ depends on Boolean variables and is expressed only with $\wedge$, then the function behaves according to situation \ref{situA} (functions listed in Table \ref{tab:MVrulesS3}). Note that nodes \gene{IL4RA} and \gene{E2F3} are controlled by a multivalued gene (resp. \gene{STAT5} and \gene{CHEK}), but their level depends only on one threshold (to be above or below level $2$).
\end{itemize}

\begin{table}
\begin{center}
\begin{tabular}{|l|lcccc|}
\hline
Genes in situation \ref{situA} & NC & SNC & WNC & Bool. NC & Struct. \\
\hline
\gene{E\_Spl} & Yes & Yes & Yes & Yes &
\\*
\gene{mQH2\_Q} & Yes & Yes & Yes & Yes & (\ref{C1})
\\*
\gene{mdH} & Yes & Yes & Yes & Yes &  (\ref{C1})
\\*
\gene{mGR} & Yes & Yes & Yes & Yes &
\\*
\gene{mGSH\_GSSG} & Yes & Yes & Yes & Yes &  (\ref{C1})
\\*
\gene{mTRX} & Yes & Yes & Yes & Yes &  (\ref{C1})
\\*
\gene{cGSH\_GSSG} & Yes & Yes & Yes & Yes &  (\ref{C1})
\\*
\gene{cGR} & Yes & Yes & Yes & Yes &
\\*
\gene{E2F3} & Yes & Yes & Yes & Yes &  (\ref{C1})
\\*
\gene{IL12RB1} & Yes & Yes & Yes & Yes &  (\ref{C1})
\\*
\gene{IL4RA} & Yes & Yes & Yes & Yes &  (\ref{C1})
\\*
\gene{\CORRECTION{ATM}} & Yes & Yes & Yes & Yes &  (\ref{C1})
\\*
\gene{\CORRECTION{CHEK12}} & Yes & Yes & Yes & Yes &  (\ref{C1})
\\\hline
\end{tabular}
\caption{\label{tab:MVnodesA} Multivalued genes in situation \ref{situA} occurring in logical models from published gene regulatory network models considered in \cite{Sub22}. The rightmost columns indicate whether the Booleanized functions are nested canalizing, and which rules satisfy structure (\ref{C1}).}
\end{center}
\end{table}

\begin{table}
\begin{center}
\begin{tabular}{|l|lcccc|}
\hline
Genes in situation \ref{situB} & NC & SNC & WNC & Bool. NC & Struct. \\
\hline
\gene{Drk} & No & Yes & Yes & Yes &  (\ref{C1})
\\*
\gene{Dsor1} & No & Yes & Yes & Yes &  (\ref{C1})
\\*
\gene{Pnt} & No & Yes & Yes & Yes &  (\ref{C1})
\\*
\gene{Stat92E} & No & Yes & Yes & Yes &  (\ref{C1})
\\*
\gene{Raf} & No & Yes & Yes & Yes &  (\ref{C1})
\\*
\gene{RI} & No & Yes & Yes & Yes &  (\ref{C1})
\\*
\gene{Sos} & No & Yes & Yes & Yes &  (\ref{C1})
\\*
\gene{Tkv} & No & Yes & Yes & Yes &
\\*
\gene{mNNT} & No & Yes & Yes & Yes &
\\*
\gene{mCa} & No & Yes & Yes & Yes &  (\ref{C1})
\\*
\gene{mGPX} & No & Yes & Yes & Yes &  (\ref{C1})
\\*
\gene{mTR} & No & Yes & Yes & Yes &
\\*
\gene{cGPX} & No & Yes & Yes & Yes &  (\ref{C1})
\\*
\gene{cTR} & No & Yes & Yes & Yes &
\\*
\gene{cTRX} & No & Yes & Yes & Yes &  (\ref{C1})
\\*
\gene{STAT5} & No & Yes & Yes & Yes &
\\*
\gene{\CORRECTION{Twi}} & No & Yes & Yes & Yes &  (\ref{C1})
\\\hline
\end{tabular}
\caption{\label{tab:MVnodesB} Multivalued genes in situation \ref{situB} occurring in logical models from \cite{Sub22}.}
\end{center}
\end{table}

\begin{table}
\begin{center}\begin{tabular}{|l|lcccc|}
\hline
Genes in situation \ref{situC} & NC & SNC & WNC & Bool. NC & Struct. \\
\hline
\gene{Ras} & No & No & No & Yes &
\\*
\gene{MadMed} & No & No & No & Yes &
\\*
\gene{Hop} & No & No & No & Yes &
\\*
\gene{mNADPH\_NADP} & No & No & No & Yes &
\\*
\gene{mNADH\_NAD} & No & No & No & Yes &  (\ref{C1})
\\*
\gene{cCa} & No & No & No & Yes  &% difficile
\\*
\gene{KrebsCycle} & No & No & No & Yes &  (\ref{C1})
\\*
\gene{VIM}& No & No & No & Yes &
\\*
\gene{CDH1} & No & No & No & Yes &
\\*
\gene{EMT} & No & No & No & Yes &
\\*
\gene{IL2R}& No & No & No & Yes &  (\ref{C1})
\\*
\gene{IL4R} & No & No & No & Yes &  (\ref{C1})
\\\hline
\end{tabular}
\caption{\label{tab:MVnodesC} Multivalued genes in situation \ref{situC} occurring in logical models from \cite{Sub22}.}
\end{center}
\end{table}

\begin{table}
\begin{center}
\begin{tabular}{|l|lcccc|}
\hline
Genes in situation \ref{situD} & NC & SNC & WNC & Bool. NC & Struct. \\
\hline
\gene{mROS} & No & No & No & No  &% difficile
\\*
\gene{Der} & No & No & No & No  &% difficile
\\*
\gene{cROS} & No & No & No & No  &% difficile
\\*
\gene{cNADPH\_NADP} & No & No & No & No  &% difficile
\\*
\gene{E2F1}& No & No & No & No  &% difficile
\\*
\gene{Spi1} & No & No & No & No  &% difficile
\\\hline
\end{tabular}
\caption{\label{tab:MVnodesD} Multivalued genes in situation \ref{situD} occurring in logical models from \cite{Sub22}.}
\end{center}
\end{table}

The rules that do not satisfy structure (\ref{C1}) are collected in Tables \ref{tab:MVrulesC1} and \ref{tab:MVrulesC2}. In particular, Table \ref{tab:MVrulesC2} gathers all the rules in situation \ref{situD}, for which even the Booleanized functions are not NC.

\CORRECTION{
\subsection{Enrichment in SNC functions}
%=======================================================
\label{sec:stats}

To get a flavour of the canalizing properties of biological functions, we \CORRECT{investigated} whether the distribution of the 48 multivalued rules that we considered (reported in Tables \ref{tab:MVnodesA} to \ref{tab:MVnodesD}) is as expected or shifted towards more than expected canalization.

Deriving a formula to calculate the exact number of SNC or WNC functions (similar to the formula given in \cite{ML12} for the number of NC multivalued functions) is not trivial and is beyond the scope of this work. Here, we proposed a method for estimating an upper bound on the maximum number of SNC functions (\ref{comment}), and elaborated a python program that calculates it (\ref{python}).

Thus, we calculated, for each value of arity $n\in\{1,\ldots,9\}$, 
\begin{itemize}
\item the proportion of SNC functions among our functions of arity $n$ (Tables \ref{tab:MVnodesA} to \ref{tab:MVnodesD}), \ie functions from $\prod_{i=1}^n\Omega_i$ to $\Z/3\Z$, where $\Omega_i=\Z/2\Z$ or $\Z/3\Z$; %for instance, $2$ out of our $6$ functions of arity $3$ are SNC;
\item the upper bound on the expected proportion of SNC functions among all functions of arity $n$. %($8.5\cdot 10^{-7}$ for $n=3$).
\end{itemize}

We were able to calculate the upper bound on the number of SNC functions only for $n\leq 5$ (computation for $n=6$ reaches the precision limit of floating point numbers). The results are reported in Table \ref{tab:prop}.
\begin{table}[ht]
$$
\setlength{\arraycolsep}{10pt}
\begin{array}{ccc}
\multirow{2}{*}{$n$} & \text{Proportion of SNC} & \text{Theoretical proportion}
\\
& \text{functions in Tables \ref{tab:MVnodesA} to \ref{tab:MVnodesD}} & \text{of SNC functions}
\\\hline
1 & 7/7 & 1
\\
2 & 16/20 & \leq 0.83
\\
3 & 3/6 & \leq 8.5\cdot 10^{-7}
\\
4 & 1/5 & \leq 10^{-29}
\\
5 & 0/2 & \leq 5.6\cdot 10^{-104}
\\
6 & 2/5 & \leq 5.6\cdot 10^{-104}
\\
7 & 0/1 & \leq 5.6\cdot 10^{-104}
\\
8 & 1/1 & \leq 5.6\cdot 10^{-104}
\\
9 & 0/1 & \leq 5.6\cdot 10^{-104}
\end{array}
$$
\CORRECTION{\caption{\label{tab:prop} Proportion of SNC functions among multivalued functions of the data set (second column) depending on the arity $n$ (first column). For instance, $3$ out of the $6$ functions of arity $3$ are SNC. The third column presents the upper bound on the expected proportion ($8.5\cdot 10^{-7}$ for $n=3$).}}
\end{table}

\CORRECT{Clearly, the proportion of SNC decreases as $n$ increases, both in the dataset and in the theoretical upper bound.} Of course, the size of this dataset does not allow us to draw statistically significant conclusions. \CORRECT{Nevertheless, we can draw some observations: for $n=2$, $80\%$ of functions from our dataset are SNC, while the upper bound is $83\%$, \textit{i.e.}, no noteworthy difference; however, for $n=3$, $4$, $6$ and $8$, the observed proportion of SNC, albeit based on small sample size, differs substantially from the expected. Thus, while the ``signal'' from this sample suggests that the proportion of SNC functions is rather high, a study on a larger dataset would be required to deepen this analysis.}
%, \CORRECT{particularly for arities $n=3$, $4$, $6$ and $8$}.
}

%%%%%%%%%%%%%%%%%%%%%%%%%%%%%%%%%%%%%%%%%%%%%%%%%%%%%%%%%%%%%%%%%%%%%%%%%%
\CORRECTION{\section{Concluding remarks and prospects}
%%%%%%%%%%%%%%%%%%%%%%%%%%%%%%%%%%%%%%%%%%%%%%%%%%%%%%%%%%%%%%%%%%%%%%%%%%
\label{sec:concl}

We prove in this manuscript that Booleanization preserves the SNC (and therefore the NC) property (Theorem \ref{th:nc}). On the other hand, the preliminary analysis of the canalizing property of multivalued biological functions (section \ref{sec:stats}) suggests a certain relevance of canalization, notably SNC, among models arising from gene networks, as is the case for Boolean biological functions. It is worth observing that within the study sample, SNC and WNC properties are not distinguished. The significant number of multivalued functions that are not WNC, but whose Booleanizations are NC (gathered in Table \ref{tab:MVnodesC}), calls for further study.

Importantly, it should be noted that we have not specified the assumptions for updating the system while they play an important role in the dynamical behavior and its properties. Implicitly, we considered here that the genes directly reach their target values: for instance, \gene{Drk} can be activated at level $2$ when \gene{Der} is at level $2$, and if the current state of \gene{Drk} is $0$, it will update directly to value $2$. However, in the context of biological systems, this is not a realistic hypothesis, steps of $1$ are more common: in the previous example, if \gene{Der} is at level $2$ and \gene{Drk} is not active, its activity will first reach level $1$. The introduction of the ``steps of $1$'' hypothesis in the definition of the function would obviously change the canalizing properties.
}

\CORRECT{
%%%%%%%%%%%%%%%%%%%%%%%%%%%%%%%%%%%%%%%%%%%%%%%%%%%%%%%%%%%%%%%%%%%%%%%%%%
\section*{Code availability}
%%%%%%%%%%%%%%%%%%%%%%%%%%%%%%%%%%%%%%%%%%%%%%%%%%%%%%%%%%%%%%%%%%%%%%%%%%

The python code used in this study is available at

\begin{center}
\url{https://www.irif.fr/\~ruet/}
\end{center}
}

%%%%%%%%%%%%%%%%%%%%%%%%%%%%%%%%%%%%%%%%%%%%%%%%%%%%%%%%%%%%%%%%%%%%%%%%%%
\bibliographystyle{plain}

\begin{thebibliography}{10}
%
\bibitem{Col17}
S. Collombet, C. van Oevelen, J. L. Sardina Ortega, W. Abou-Jaoudé, B. Di Stefano, M. Thomas-Chollier, T. Graf, and  D. Thieffry.
\newblock Logical modeling of lymphoid and myeloid cell specification and transdifferentiation.
\newblock {\em Proc. Natl. Acad. Sci.}, 114(23):5792--5799, 2017.
%
\bibitem{DRC11}
G. Didier, É. Remy, and C. Chaouiya.
\newblock Mapping multivalued onto Boolean dynamics.
\newblock {\em J. Theoret. Biol.}, 270(1):177--184, 2011.
%
\bibitem{KBHKS20}
\CORRECTION{C. Kadelka, T.-M. Butrie, E. Hilton, J. Kinseth, and H. Serdarevic.}
\newblock A meta-analysis of Boolean network models reveals design principles of gene regulatory networks.
\newblock {\em arXiv:2009.01216}, 2020.
%
\bibitem{KLKAL17}
C. Kadelka, Y. Li, J. Kuipers, J. O. Adeyeye, and R. Laubenbacher.
\newblock Multistate nested canalizing functions and their networks.
\newblock {\em Theoret. Comput. Sci.}, 675:1--14, 2017.
%
\bibitem{Kau93}
S. A. Kauffman.
\newblock {\em The origins of order: Self organization and selection in evolution}.
\newblock Oxford University Press, 1993.
%
\bibitem{Kau03}
S. Kauffman, C. Peterson, B. Samuelsson, and C. Troein.
\newblock Random Boolean network models and the yeast transcriptional network.
\newblock {\em Proc. Natl. Acad. Sci.}, 100(25), 2003.
%
\bibitem{Kau04}
S. Kauffman, C. Peterson, B. Samuelsson, and C. Troein.
\newblock Genetic networks with canalyzing Boolean rules are always stable.
\newblock {\em Proc. Natl. Acad. Sci.}, 101(49), 2004.
%
\bibitem{Klo13}
J. G. Klotz, R. Heckel, and S. Schober.
\newblock Bounds on the average sensitivity of nested canalizing functions.
\newblock {\em Plos One}, 8(5), 2013.
%
\bibitem{Lau13}
Y. Li, J. O. Adeyeye, D. Murrugarra, B. Aguilar, and R. Laubenbacher.
\newblock Boolean nested canalizing functions: A comprehensive analysis.
\newblock {\em Theoret. Comput. Sci.}, 481:24--36, 2013.
%
\bibitem{Mbo13}
A. Mbodj, G. Junion, C. Brun, E. Furlong, and D. Thieffry.
\newblock Logical modelling of Drosophila signalling pathways.
\newblock {\em Mol. Biosyst.}, 9(9):2248--58, 2013.
%
\bibitem{ML11}
D. Murrugarra and R. Laubenbacher.
\newblock Regulatory patterns in molecular interaction networks.
\newblock {\em J. Theor. Biol.}, 288:66--72, 2011.
%
\bibitem{ML12}
D. Murrugarra and R. Laubenbacher.
\newblock The number of multistate nested canalyzing functions.
\newblock {\em Physica D: Nonlinear Phenomena}, 241(10):929--938, 2012.
%
\bibitem{Nal10}
A. Naldi, J. Carneiro, C. Chaouiya, and D. Thieffry.
\newblock Diversity and plasticity of Th cell types predicted from regulatory network modelling.
\newblock {\em PLoS Comput. Bio.}, 6(9), 2010.
%
\bibitem{ODon14}
R. O'Donnell.
\newblock {\em Analysis of Boolean functions}.
\newblock Cambridge University Press, 2014.
%
\bibitem{Rem15}
É. Remy, S. Rebouissou, C. Chaouiya, A. Zinovyev, F. Radvany, and L. Calzone.
\newblock A modeling approach to explain mutually exclusive and co-occurring genetic alterations in bladder tumorigenesis.
\newblock  {\em Cancer Res.}, 75(19): 4042--4052, 2015.
%
\bibitem{RR05}
É. Remy and P. Ruet.
\newblock On differentiation and homeostatic behaviours of Boolean dynamical systems.
\newblock BioConcur 2005, in {\em Transactions on Computational Systems Biology VIII}, Springer LNCS 4780: 92--101, 2007.
%
\bibitem{RR22}
É. Remy and P. Ruet.
\newblock Average sensitivity of nested canalizing multivalued functions.
\newblock {\em Computational Methods in Systems Biology XXI}, Springer LNCS 14137, 2023.
%
\bibitem{RR08b}
É. Remy, P. Ruet, and D. Thieffry.
\newblock Graphic requirements for multistability and attractive cycles in a Boolean dynamical framework.
\newblock {\em Adv. Appl. Math.}, 41(3):335--350, 2008.
%
\bibitem{Rue16}
P. Ruet.
\newblock Local cycles and dynamical properties of Boolean networks.
\newblock {\em Math. Struct. Comput. Sci.}, 26(4):702--718, 2016.
%
\bibitem{San19}
J.A. Sánchez-Villanueva, O. Rodríguez-Jorge, O. Ramírez-Pliego, G.R. Salgado, W. Abou-Jaoudé, C. Hernandez, A. Naldi, D. Thieffry, and M.A. Santana.
\newblock Contribution of ROS and metabolic status to neonatal and adult CD8+ T cell activation.
\newblock {\em PLoS ONE} 14(12): e0226388, 2019.
%
\bibitem{Sil20}
D.A. Silveira, and J.C. Mombach.
\newblock Dynamics of the feedback loops required for the phenotypic stabilization in the epithelial-mesenchymal transition.
\newblock {\em FEBS J.}, 287(3):578--588, 2020.
%
\bibitem{Sub22}
A. Subbaroyan, O. C. Martin, and A. Samal.
\newblock Minimum complexity drives regulatory logic in Boolean models of living systems.
\newblock {\em PNAS Nexus}, 1:1--12, 2022.
%
\bibitem{Tho73}
R. Thomas.
\newblock Boolean formalization of genetic control circuits.
\newblock {\em J. Theor. Biol.}, 42:563--585, 1973.
%
\bibitem{Tho91}
R. Thomas.
\newblock Regulatory networks seen as asynchronous automata: a logical description.
\newblock {\em J. Theor. Biol.}, 153:1--23, 1991.
%
\bibitem{Ton19}
E. Tonello.
\newblock On the conversion of multivalued gene regulatory networks to Boolean dynamics.
\newblock {\em Discrete Appl. Math.}, 259(C):193--204, 2019.
%
\bibitem{vH79}
P. Van Ham.
\newblock How to deal with variables with more than two levels.
\newblock In {\em Kinetic logic: a Boolean approach to the analysis of complex regulatory systems}. Springer, 326--343, 1979.
%
\bibitem{Wad42}
C.H. Waddington. 
\newblock Canalization of development and the inheritance of acquired characters. 
\newblock{\em Nature}, 150:563–565, 1942.
\end{thebibliography}

\newpage

\appendix
%%%%%%%%%%%%%%%%%%%%%%%%%%%%%%%%%%%%%%%%%%%%%%%%%%%%%%%%%%%%%%%%%%%%%%%%%%
\section{Logical rules}
%%%%%%%%%%%%%%%%%%%%%%%%%%%%%%%%%%%%%%%%%%%%%%%%%%%%%%%%%%%%%%%%%%%%%%%%%%
\label{appendix}

\small

\begin{longtable}{|L|L|L|L|L|}
\hline
\text{Gene } i & x_i & \text{Rule} & \text{Ref}
\\\hline \endfirsthead
\hline
\text{Gene } i & x_i & \text{Rule \hspace{3cm} (continued)} & \text{Ref}
\\\hline \endhead
\multirow{2}{*}{\gene{IL12RB1}}
& 1 & \neg \gene{IRF1}& \multirow{4}{*}{\cite{Nal10}}
\\*
& 2 &\gene{IRF1}&
\\\cline{1-3}
\multirow{2}{*}{\gene{IL4RA}}
& 1 & \neg \gene{STAT5}:2& 
\\*
& 2 &\gene{STAT5}:2&
\\\hline
\gene{mGSH}
& 1 & \gene{mGR} \wedge \gene{mGPX}& \multirow{10}{*}{\cite{San19}}
\\*
\quad \gene{\_GSSG}
& 2 & \gene{mGR} \wedge \neg \gene{mGPX}&
\\\cline{1-3}
\multirow{2}{*}{\gene{mdH}}
& 1 & \gene{ETC} \wedge \gene{ATPSyn}&
\\*
& 2 & \gene{ETC} \wedge \neg \gene{ATPSyn}&
\\\cline{1-3}
\multirow{2}{*}{\gene{mQH2\_Q}}
& 1 & \gene{ETC} \wedge \neg \gene{mdH}&
\\*
& 2 & \gene{ETC} \wedge \gene{mdH}&
\\\cline{1-3}
\multirow{2}{*}{\gene{mTRX}}
& 1 & \gene{mTR} \wedge \gene{mROS}&
\\*
& 2 & \gene{mTR} \wedge \neg \gene{mROS}&
\\\cline{1-3}
\gene{cGSH}
& 1 & \gene{cGR} \wedge \gene{cGPX}&
\\*
\quad \gene{\_GSSG}
& 2 & \gene{cGR} \wedge \neg \gene{cGPX}&
\\\hline
\multirow{2}{*}{\gene{E2F3}}
& 1 & \neg \gene{CHEK}:2 \wedge \neg \gene{RB1} \wedge \gene{RAS}&\multirow{6}{*}{\cite{Rem15}}
\\*
& 2 &  \gene{CHEK}:2 \wedge \neg \gene{RB1} \wedge \gene{RAS}&
\\\cline{1-3}
\multirow{2}{*}{\gene{\CORRECTION{ATM}}}
& 1 & \neg \gene{DNAdam} \wedge \neg \gene{E2F1} &
\\*
& 2 &  \gene{DNAdam} \wedge \gene{E2F1} &
\\\cline{1-3}
\multirow{2}{*}{\gene{\CORRECTION{CHEK}}}
& 1 & \neg \gene{ATM} \wedge \neg \gene{E2F1} &
\\*
& 2 &  \gene{ATM} \wedge \gene{E2F1} &
\\\hline
\caption{\label{tab:MVrulesS3} \CORRECTION{Logical rules associated to multivalued genes from published gene regulatory network models considered in \cite{Sub22}, which are NC and satisfy structure (\ref{C1}) (situation \ref{situA}).}}
\end{longtable}

\newpage

\vspace*{-2cm}
\begin{longtable}{|L|L|L|L|L|}
\hline
\text{Gene } i & x_i & \text{Rule} & \text{Ref}
\\\hline \endfirsthead
\hline
\text{Gene } i & x_i & \text{Rule \hspace{3cm} (continued)} & \text{Ref}
\\\hline \endhead
\multirow{2}{*}{\gene{Drk}}
& 1 & \gene{Der}:1
& \multirow{17}{*}{\cite{Mbo13}}
\\*
& 2 &\gene{Der}:2
&
\\\cline{1-3}
\multirow{2}{*}{\gene{Dsor1}}
& 1 & \gene{Raf}:1& 
\\*
& 2 &\gene{Raf}:2&
\\\cline{1-3}
\multirow{2}{*}{\gene{RI}}
& 1 & \gene{Dsor1}:1  \wedge  \gene{Msk}& 
\\*
& 2 &\gene{Dsor1}:2  \wedge  \gene{Msk}&
\\\cline{1-3}
\multirow{2}{*}{\gene{Pnt}}
& 1 & \gene{RI}:1&
\\*
& 2 &\gene{RI}:2&
\\\cline{1-3}
\multirow{2}{*}{\gene{Sos}}
& 1 & \gene{Drk}:1&
\\*
& 2 &\gene{Drk}:2&
\\\cline{1-3}
\multirow{3}{*}{\gene{Stat92E}}
& 1 & \gene{Hop}:1 \wedge \phi& 
\\*
& 2 & \gene{Hop}:2 \wedge \phi \ \text{where } \phi = \neg \gene{Su_{var}} \wedge \neg \gene{Ptp61F} \wedge \phantom{0}&
\\*
&& \phantom{\gene{Hop}:2 \wedge \phi \ } \neg \gene{Ken} \wedge \neg \gene{Brwd3} \wedge \neg \gene{Socs44A}&
\\\cline{1-3}
\multirow{2}{*}{\gene{Raf}}
& 1 & \gene{Ras}:1 \wedge\gene{Cnk} \wedge\gene{Src42} \wedge\gene{Ksr}&
\\*
& 2 & \gene{Ras}:2 \wedge\gene{Cnk} \wedge\gene{Src42} \wedge\gene{Ksr}&
\\\cline{1-3}
\multirow{2}{*}{\gene{\CORRECTION{Twi}}}
& 1 & \gene{Da} \wedge \gene{Emc} \wedge \gene{E\_Spl}:1&
\\*
& 2 & (\gene{Da} \wedge \neg\gene{Emc}) \vee   (\gene{Da} \wedge \gene{Emc}\wedge \neg\gene{E\_Spl})&
\\\hline
\multirow{2}{*}{\gene{mCa}}
& 1 & \gene{cCa}:1 & \multirow{8}{*}{\cite{San19}}
\\*
& 2 & \gene{cCa}:2 &
\\\cline{1-3}
\multirow{2}{*}{\gene{mGPX}}
& 1 & \gene{mGSH\_GSSG}:1 \wedge \neg \gene{mROS}&
\\*
& 2 & \gene{mGSH\_GSSG}:2 \wedge \neg \gene{mROS}&
\\\cline{1-3}
\multirow{2}{*}{\gene{cTRX}}
& 1 & \gene{cTR} \wedge \gene{cROS}:1&
\\*
& 2 & \gene{cTR} \wedge \neg \gene{cROS}&
\\\cline{1-3}
\multirow{2}{*}{\gene{cGPX}}
& 1 & \gene{cGSH\_GSSG}:1 \wedge \neg \gene{cROS}&
\\*
& 2 & \gene{cGSH\_GSSG}:2 \wedge \neg \gene{cROS}&
\\\hline

\caption{\CORRECTION{\label{tab:MVrulesS1} Logical rules associated to multivalued genes from published gene regulatory network models considered in \cite{Sub22}, which are SNC but not NC, and satisfy structure (\ref{C1}) (situation \ref{situB}).}}
\end{longtable}

%\pagebreak

%\newpage

%\vspace{-5mm}
\begin{longtable}{|L|L|L|L|L|}
\hline
\text{Gene } i & x_i & \text{Rule} & \text{Ref}
\\\hline \endfirsthead
\hline
\text{Gene } i & x_i & \text{Rule \hspace{3cm} (continued)} & \text{Ref}
\\\hline \endhead
\gene{mNADH}
& 1 & (\gene{KrebsCycle} \vee \gene{mPDH} \vee \gene{FAO}) \wedge \gene{ETC}& \multirow{6}{*}{\cite{San19}}
\\*
\quad \gene{\_NAD}
& 2 & (\gene{KrebsCycle} \vee \gene{mPDH} \vee \gene{FAO}) \wedge \neg \gene{ETC}&
\\\cline{1-3}
\multirow{4}{*}{\gene{KrebsCycle}}
& 1 & \neg \gene{mCa} \wedge \neg \gene{mNADH\_NAD} \wedge \phantom{0}&
\\*
&& \qquad (\gene{AcetylCoA} \vee \gene{GLUTAMINOLYSIS})&
\\*
& 2 & \gene{mCa} \wedge \neg \gene{mNADH\_NAD} \wedge \phantom{0}&
\\*
&& \qquad (\gene{AcetylCoA} \vee \gene{GLUTAMINOLYSIS})&
\\\hline
\multirow{2}{*}{\gene{IL2R}}
& 1 & \neg \gene{IL2RA} \wedge \gene{CGC} \wedge \gene{IL2RB} \wedge (\gene{IL2} \vee \gene{IL2\_e}) & \multirow{4}{*}{\cite{Nal10}}
\\*
& 2 & \gene{IL2RA} \wedge \gene{CGC} \wedge \gene{IL2RB} \wedge  (\gene{IL2} \vee \gene{IL2\_e})&
\\\cline{1-3}
\multirow{2}{*}{\gene{IL4R}}
& 1 & \gene{IL4RA}:1 \wedge \gene{CGC} \wedge  (\gene{IL4} \vee \gene{IL4\_e})& 
\\*
& 2 &  \gene{IL4RA}:2 \wedge \gene{CGC} \wedge (\gene{IL4} \vee \gene{IL4\_e})&
\\\hline
\caption{\CORRECTION{\label{tab:MVrulesS2} Logical rules associated to multivalued genes from published gene regulatory network models considered in \cite{Sub22}, which are not WNC but with NC Booleanization, and satisfy structure (\ref{C1}) (situation \ref{situC}).}}
\end{longtable}

\newpage

\vspace*{-4cm}
\begin{longtable}{|L|L|L|L|L|}
\hline
\text{Gene } i & x_i & \text{Rule} & \text{Ref}
\\\hline \endfirsthead
\hline
\text{Gene } i & x_i & \text{Rule \hspace{3cm} (continued)} & \text{Ref}
\\\hline \endhead
\multirow{4}{*}{\gene{mNNT}}
& 1 & \gene{mNADH\_NAD}:1 \wedge \gene{mdH}:1 & \multirow{27}{*}{\cite{San19}}
\\*
& 2 & (\gene{mNADH\_NAD}:2 \wedge \gene{mdH}:1) \vee \phantom{0} &
\\*
&& \qquad (\gene{mNADH\_NAD}:2 \wedge\gene{mdH}:2) \vee \phantom{0} &
\\*
&& \qquad (\gene{mNADH\_NAD}:1 \wedge\gene{mdH}:2) &
\\\cline{1-3}
%\multirow{2}{*}{\gene{mNADPH\_NADP}}
\gene{mNADPH}
& 1 &  (\neg \gene{mNNT} \wedge\gene{mIDH2} ) \vee (\gene{mNNT}:1 \wedge\neg \gene{mIDH2} )&
\\*
\quad \gene{\_NADP}
& 2 & \gene{mNNT}:2 \vee (\gene{mNNT}:1  \wedge\gene{mIDH2}) &
\\\cline{1-3}
\multirow{4}{*}{\gene{mGR}}
& 1 & (\gene{mNADPH\_NADP}:1 \vee \gene{mNADPH\_NADP}:2) \wedge \phantom{0} &
\\*
&& \qquad (\neg\gene{mGSH\_GSSG} \vee \gene{mGSH\_GSSG}:1) &% \vee \phantom{0}&
\\*
& 2 & (\gene{mNADPH\_NADP}:1 \vee \gene{mNADPH\_NADP}:2) \wedge \phantom{0}&
\\*
&& \qquad \gene{mGSH\_GSSG}:2&
\\\cline{1-3}
\multirow{3}{*}{\gene{mTR}}
& 1 & (\gene{mNADPH\_NADP}:2 \wedge (\neg\gene{mTRX} \vee \gene{mTRX}:1)) \vee \phantom{0}&
\\*
&&  \qquad \gene{mNADPH\_NADP}:1&
\\*
& 2 & \gene{mTRX}:2 \wedge \gene{mNADPH\_NADP}:2&
\\\cline{1-3}
\multirow{7}{*}{\gene{cCa}}
& 1 & (\gene{IP3R} \wedge \neg \gene{ORAI1} \wedge \neg \gene{TRPM2} \wedge \neg \gene{PMCA})  \vee \phantom{0}&
\\*
&&  \qquad (\neg \gene{IP3R} \wedge \neg \gene{ORAI1} \wedge \gene{TRPM2} \wedge \neg \gene{PMCA}) \vee \phantom{0}&
\\*
&&  \qquad (\neg \gene{IP3R} \wedge \gene{ORAI1} \wedge \gene{TRPM2} \wedge \neg \gene{PMCA}) \vee \phantom{0}&
\\*
&&  \qquad (\neg \gene{IP3R} \wedge \gene{ORAI1} \wedge \neg \gene{TRPM2} \wedge \neg \gene{PMCA})&
\\*
& 2 & (\gene{IP3R} \wedge \gene{ORAI1} \wedge \gene{TRPM2} \wedge \neg \gene{PMCA})\vee \phantom{0}&
\\*
&&  \qquad (\gene{IP3R} \wedge \gene{ORAI1} \wedge \neg \gene{TRPM2} \wedge \neg \gene{PMCA}) \vee \phantom{0}&
\\*
&&  \qquad (\gene{IP3R} \wedge \neg \gene{ORAI1} \wedge \gene{TRPM2} \wedge \neg \gene{PMCA})&
\\\cline{1-3}
\multirow{4}{*}{\gene{cGR}}
& 1 & (\gene{cNADPH\_NADP}:1 \vee \gene{cNADPH\_NADP}:2) \wedge \phantom{0}& 
\\*
&& \qquad (\neg\gene{cGSH\_GSSG} \vee \gene{cGSH\_GSSG}:1)&% \vee \phantom{0}
\\*
& 2 & (\gene{cNADPH\_NADP}:1 \vee \gene{cNADPH\_NADP}:2) \wedge \phantom{0}&
\\*
&&  \qquad \gene{cGSH\_GSSG}:2&
\\\cline{1-3}
\multirow{3}{*}{\gene{cTR}}
& 1 & (\gene{cNADPH\_NADP}:2 \wedge (\neg\gene{cTRX} \vee \gene{cTRX}:1)) \vee \phantom{0}&
\\*
&&  \qquad \gene{cNADPH\_NADP}:1&
\\*
& 2 & \gene{cTRX}:2 \wedge \gene{cNADPH\_NADP}:2&
\\\hline
\multirow{2}{*}{\gene{Ras}}
& 1 & (\gene{Sos}:1 \wedge \neg (\gene{Sty} \wedge \gene{Gap1})) \vee 
(\gene{Gap1} \wedge \gene{Sty} \wedge \gene{Sos}:2)& \multirow{12}{*}{\cite{Mbo13}}
\\*
& 2 & \gene{Sos}:2 \wedge \neg (\gene{Sty} \wedge \gene{Gap1}) &
\\\cline{1-3}
\multirow{2}{*}{\gene{Tkv}}
& 1 & (\gene{Dpp}:1 \vee \gene{Scw} \vee \gene{Gbb} ) \wedge \gene{Punt} \wedge \neg (\gene{Sog} \vee \gene{Tsg})&
\\*
& 2 & \gene{Dpp}:2 \wedge \gene{Punt} \wedge \neg (\gene{Sog} \vee \gene{Tsg})&
\\\cline{1-3}
\multirow{3}{*}{\gene{Hop}}
& 1 & \gene{Dome} \wedge \neg \gene{ET}
\wedge [(\gene{Stam} \wedge \gene{Hrs} \wedge \gene{Socs36E}) \vee \phantom{0}& 
\\*
&& \qquad (\neg (\gene{Stam} \wedge \gene{Hrs}) \wedge \neg \gene{Socs36E})] &
\\*
& 2 & \gene{Dome} \wedge \neg \gene{ET} \wedge \gene{Stam} \wedge \gene{Hrs} \wedge \neg \gene{Socs36E}&
\\\cline{1-3}
\multirow{3}{*}{\gene{MadMed}}
& 1 & ((\gene{Tkv}:1 \vee \gene{Sax}:1) \wedge \neg \gene{Dad}:1 \wedge \neg \gene{Tkv}:2) \vee \phantom{0}&
\\*
&& \qquad (\gene{Tkv}:2 \wedge\gene{Dad}:1)&
\\*
& 2 & \gene{Tkv}:2 \wedge\neg  \gene{Dad}:1&
\\\cline{1-3}
\multirow{2}{*}{\gene{E\_Spl}}
& 1 & \neg (\gene{Nicd} \wedge \gene{Mam})&%\vee \neg (\gene{Su\_H\_CSL} \vee \gene{H}) \vee \neg (\gene{Gro} \vee \gene{CtBP}) \wedge \gene{H}
\\*
& 2 &  \gene{Nicd} \wedge \gene{Mam} \wedge \neg \gene{H} \wedge \neg \gene{Gro} \wedge \neg \gene{CtBP} &%\wedge \gene{Su\_H}

\\\hline
\multirow{2}{*}{\gene{VIM}}
& 1 & (\gene{SNAIL1} \vee \gene{ZEB1}) \wedge \neg (\gene{ZEB1} \wedge \gene{SNAIL1})& \multirow{6}{*}{\cite{Sil20}}
\\*
& 2 & \gene{ZEB1} \wedge \gene{SNAIL1}&
\\\cline{1-3}
\multirow{2}{*}{\gene{CDH1}}
& 1 & \neg \gene{ZEB1} \wedge \neg \gene{SNAIL1}&
\\*
& 2 & (\gene{SNAIL1} \vee \gene{ZEB1}) \wedge \neg (\gene{ZEB1} \wedge \gene{SNAIL1})&
\\\cline{1-3}
\multirow{2}{*}{\gene{EMT}}
& 1 & (\gene{VIM} \vee \neg \gene{CDH1}) \wedge \neg (\gene{VIM} \wedge \neg \gene{CDH1})&
\\*
& 2 & \gene{VIM} \wedge \neg \gene{CDH1}&
\\\hline\multirow{3}{*}{\gene{STAT5}}
& 1 & \neg (\gene{IL4R}:2 \vee \gene{IL2R}:2) \wedge \phantom{0}& \multirow{3}{*}{\cite{Nal10}}
\\*
&& \qquad (\gene{IL4R}:1 \vee \gene{IL2R}:1 \vee \gene{IL15R})&
\\*
& 2 & \gene{IL4R}:2 \vee \gene{IL2R}:2&
\\\hline
\caption{\CORRECTION{\label{tab:MVrulesC1} Logical rules associated to multivalued genes from published gene regulatory network models considered in \cite{Sub22}, which are in situation \ref{situA}, \ref{situB} or \ref{situC} and NOT satisfy structure (\ref{C1}).}}
\end{longtable}

\newpage

\begin{longtable}{|L|L|L|L|L|}
\hline
\text{Gene } i & x_i & \text{Rule} & \text{Ref}
\\\hline \endfirsthead
\hline
\text{Gene } i & x_i & \text{Rule \hspace{3cm} (continued)} & \text{Ref}
\\\hline \endhead
\multirow{5}{*}{\gene{mROS}}
& 1 & ((\neg \gene{mPRX} \wedge \gene{mGPX}) \vee 
(\gene{mPRX} \wedge \neg \gene{mGPX})) \wedge \phantom{0} & \multirow{18}{*}{\cite{San19}}
\\* 
&& \qquad (\gene{ETC} \vee \gene{mGR} \vee \gene{mTR}) \wedge \neg \gene{cROS}&
\\*
& 2 & [(\gene{ETC} \vee \gene{mGR} \vee \gene{mTR} \vee \gene{cROS}) \wedge \phantom{0}&
\\* 
&& \qquad \neg (\gene{mPRX} \vee \gene{mGPX})] \vee [\gene{cROS} \wedge \phantom{0}&
\\* 
&& \qquad (\gene{mPRX} \vee \gene{mGPX} \vee \gene{ETC} \vee \gene{mGR} \vee \gene{mTR})]&
\\\cline{1-3}
\multirow{6}{*}{\gene{cROS}}
& 1 & ((\gene{cPRX} \wedge \neg \gene{cGPX}) \vee (\neg \gene{cPRX} \wedge \gene{cGPX})) \wedge \phantom{0}&
\\*
&&  \qquad (\gene{NOX2} \vee \gene{DUOX1} \vee \gene{cTR} \vee \gene{cGR}) \wedge \neg \gene{mROS}&
\\*
& 2 & [\neg (\gene{cPRX} \vee \gene{cGPX}) \wedge \phantom{0}&
\\*
&& \qquad (\gene{NOX2} \vee \gene{mROS} \vee \gene{DUOX1} \vee \gene{cTR} \vee \gene{cGR})] \vee \phantom{0}&
\\*
&& \qquad [\gene{mROS} \wedge (\gene{cPRX} \vee \gene{cGPX} \vee \phantom{0}&
\\*
&& \qquad \gene{NOX2} \vee \gene{DUOX1} \vee \gene{cTR} \vee \gene{cGR})]&
\\\cline{1-3}
\multirow[b]{3}{*}{\gene{cNADPH}}
& 1 & (\gene{mShuttle} \wedge \neg \gene{PPP} \wedge \neg \gene{GLUTAMINOLYSIS})   \vee \phantom{0} & 
\\*
&& \qquad (\neg \gene{mShuttle} \wedge \gene{PPP} \wedge \neg \gene{GLUTAMINOLYSIS}) \vee \phantom{0}&
\\*
&& \qquad (\neg \gene{mShuttle} \wedge \neg \gene{PPP} \wedge \gene{GLUTAMINOLYSIS})&
\\*
\multirow[t]{3}{*}{\quad \gene{\_NADP}}
& 2 & (\gene{mShuttle} \wedge \gene{PPP} \wedge \gene{GLUTAMINOLYSIS})\vee \phantom{0}&
\\*
&& \qquad (\neg \gene{mShuttle} \wedge \gene{PPP} \wedge \gene{GLUTAMINOLYSIS}) \vee \phantom{0}&
\\*
&& \qquad (\gene{mShuttle} \wedge \neg \gene{PPP} \wedge \gene{GLUTAMINOLYSIS}) \vee \phantom{0}&
\\*
&&  \qquad (\gene{mShuttle} \wedge \gene{PPP} \wedge \neg \gene{GLUTAMINOLYSIS})&
\\\hline
\multirow{4}{*}{\gene{Der}}
& 1 & [(\gene{Spi}:1 \vee \gene{Vein}) \wedge \neg \gene{Aos}:1 \wedge \neg \gene{Kek}:1 \wedge \phantom{0}& \multirow{4}{*}{\cite{Mbo13}}
\\*
&&\qquad \neg \gene{Cbl} \wedge \neg \gene{Spi}:2 \wedge \gene{Shc}] \vee \phantom{0}&
\\*
&& \qquad [\gene{Spi}:2 \wedge \gene{Shc} \wedge (\gene{Kek}:1 \vee \gene{Aos}:1 \vee \gene{Cbl})] &
\\*
& 2 & \gene{Spi}:2  \wedge  \neg \gene{Kek}:1  \wedge  \neg \gene{Aos}:1  \wedge  \neg \gene{Cbl}  \wedge  \gene{Shc}&
\\\hline
\multirow{5}{*}{\gene{E2F1}}
& 1 & [(\neg (\gene{CHEK}:2 \wedge \gene{ATM}:2) \wedge (\gene{RAS} \vee \gene{E2F3})) \vee \phantom{0}& \multirow{5}{*}{\cite{Rem15}}
\\*
&& \qquad (\gene{CHEK}:2 \wedge \gene{ATM}:2 \wedge \neg \gene{RAS} \wedge \gene{E2F3}:1)] \wedge \phantom{0}&
\\*
&& \qquad \neg \gene{RB1} \wedge \neg \gene{RBL2}&
\\*
& 2 & \neg \gene{RB1} \wedge \neg \gene{RBL2}\wedge \gene{ATM}:2 \wedge \gene{CHEK}:2 \wedge \phantom{0}&
\\*
&& \qquad (\gene{RAS} \vee \gene{E2F3}:2)&
\\\hline
\multirow{6}{*}{\gene{Spi1}}
& 1 & [\gene{Spi1} \wedge \gene{Runx1} \wedge \phantom{0}&\multirow{6}{*}{\cite{Col17}}
\\*
&& \qquad \neg (((\gene{Cebpa} \vee \gene{Cebpb}) \wedge \gene{Csf1r}) \vee \phantom{0}& 
\\*
&& \qquad (\gene{Foxo1} \wedge \neg \gene{Ikzf1}) \vee \neg \gene{Gfi1})]&
\\*
&& \quad\; \phantom{0} \vee (\gene{Foxo1} \wedge \gene{Ebf1} \wedge \gene{Ikzf1} \wedge \neg (\gene{Spi1}\vee \gene{Runx1}))&
\\*
& 2 & (\gene{Spi1} \wedge \gene{Runx1} \wedge (\gene{Cebpa} \vee \gene{Cebpb} ) \wedge \gene{Csf1r}) \vee \phantom{0}&
\\*
&& \qquad (\gene{Spi1} \wedge \gene{Runx1}\wedge \neg (\gene{Gfi1} \vee (\gene{Foxo1} \wedge   \gene{Ikzf1})))&
\\\hline
\caption{\CORRECTION{\label{tab:MVrulesC2} Logical rules associated to multivalued genes from published gene regulatory network models considered in \cite{Sub22}, which are in situation \ref{situD} and NOT satisfy structure (\ref{C1}).}}
\end{longtable}

\normalsize

\newpage

\CORRECTION{
\section{Method for computing an upper bound on the proportion of SNC functions}
\label{comment}

We first observe that, by Proposition \ref{prop:charac}, each SNC function $f:\prod_{i=1}^n\Omega_i\rightarrow \Z/3\Z$ can be defined by a sequence of at most $K=\sum_i(k_i-1)$ steps of canalization ($k_i=|\Omega_i|$). This representation is obviously not unique, and can be improved by grouping some commuting steps: for instance, consecutive steps which fix $f(x)$ to the same value commute. This leads to a (still \CORRECT{non-unique}) representation of $f$ by a matrix
$$
(m_i^j,M_i^j,v^j)_{i\in\{1,\ldots,n\}}^j
$$
where the step number $j$ is at most $K$: at each step $j$, canalization of variables $x_i$ ($i=1,\ldots,n$)
\begin{itemize}
\item $m_i^j$ times to its minimal value in its range
\item and $M_i^j$ times to its maximal value
\end{itemize}
leads to $f(x)=v^j$. Consecutive values $v^j,v^{j+1}$ are assumed to be different, except possibly for the last two values. For each $i$, $\sum_j m_i^j+M_i^j=k_i-1$, so the above matrices can be produced by enumerating lists of given length $K$ whose sum equals a given number, \ie ordered integer partitions (with fixed length and possibly null entries). Then void steps (in which $m_i^j=M_i^j=0$ for all $i$) are removed.
}

%%%%%%%%%%%%%%%%%%%%%%%%%%%%%%%%%%%%%%%%%%%%%%%%%%%%%%%%%%%%%%%%%%%%%%%%%%
\CORRECTION{
\section{Python program computing the upper bound on the proportion of SNC functions}
\label{python}
}
%%%%%%%%%%%%%%%%%%%%%%%%%%%%%%%%%%%%%%%%%%%%%%%%%%%%%%%%%%%%%%%%%%%%%%%%%%

%\subsection{Code}
%=======================================================
\lstset{language=Python}
\begin{lstlisting}
import numpy as np
import itertools
import math
def sums(length, totalsum):
    # all lists of given length whose sum equals totalsum
    if length == 1:
        yield (totalsum,)
    else:
        for v in range(totalsum + 1):
            for p in sums(length - 1, totalsum - v):
                yield (v,) + p
def removezeroes(lst,n):
    # lst = list of length n "rows"
    # removes zero rows in lst
    return [x for x in lst if x != [0] * n]
def upSNC(arities):
    # arities = list of arities k_i of variables x_1,...,x_n
    n = len(arities)
    # n : nb of variables
    totalsums = [arities[i]-1 for i in range(n)]
    # totalsums = list of max x_i (= k_i - 1)
    length = sum(totalsums)
    # length = K = sum of the k_i - 1
    l = [list(sums(length, totalsums[i])) for i in range(n)]
    # l lists all SNC decompositions (lists of steps)
    # in columns
    s = set()
    for pr in itertools.product(*l):
        step = removezeroes(np.array(pr).T.tolist(),n)
        s.add(tuple(tuple(i) for i in step))
    # s = set of SNC decompositions in rows
    # after removing zero rows
    ones = [2**sum([e.count(1) for e in x]) for x in s]
    # nb of possibilities for binary inputs
    twos = [3**sum([e.count(2) for e in x]) for x in s]
    # nb of possibilities for ternary inputs
    values = [9 * 2**(len(x)-1) for x in s]
    # 3 * 2^(nb steps - 1) * 3
    return sum([a*b*c for a,b,c in zip(ones,twos,values)])
def uppropSNCbyarity(arities):
    p = 1
    for a in arities:
        p *= a
    return upSNC(arities)/3**p
def uppropSNCbynbvars(n):
    snc = 0
    for i in range(n+1):
        print('upSNC('+str([2] * (n-i) + [3] * i)+')...')
        snc += upSNC([2] * (n-i) + [3] * i) * math.comb(n,i)
    tot = 0
    for i in range(n+1):
        tot += 3**(2**i * 3**(n-i)) * math.comb(n,i)
    return snc/tot
\end{lstlisting}
\end{document}